\newtheorem{teo}{Theorem}[section]
\newtheorem{lem}{Lemma}[section]
\newtheorem{cor}{Corollary}[section]
\newtheorem{defi}{Definition}[section]
\newtheorem{prop}{Proposition}[section]
\newtheorem{obs}{Observation}[section]
\newtheorem{notac}{Notation}[section]
\newtheorem{remark}{Remark}[section]
\begin{document}

\begin{center}
	\LARGE{Cycles of length 3 and 4 in edge-colored complete graphs with restrictions in the color transitions\footnote[1]{Research supported by grants CONACYT FORDECYT-PRONACES/39570/2020 and UNAM-DGAPA-PAPIIT IN102320 (H. Galeana-Sánchez), and CONACYT scholarship for postgraduate studies 782604 (F. Hernández-Lorenzana)}}
\end{center}

\begin{center}
	$\text{Hortensia Galeana-Sánchez}^{\text{a}}, \text{Felipe Hernández-Lorenzana}^{\text{a,}}\footnotemark[2], \text{Rocío Sánchez-López}^{\text{b}}$
\end{center}
\vspace*{0.2cm}
\begin{center}
	\footnotesize{$^{a}$Instituto de Matemáticas, Universidad Nacional Autónoma de México, Área de la investigación científica, Circuito Exterior, Ciudad Universitaria, 04510 Coyoacán, CDMX, Mexico\\
	$^{b}$Facultad de Ciencias, Universidad Nacional Autónoma de México, Ciudad Universitaria, Circuito Exterior s/n, 04510 Coyoacán, CDMX, Mexico }\footnotetext[2]{Corresponding author} \footnote[0]{Email adresses: \texttt{hgaleana@matem.unam.mx} (H. Galeana-Sánchez), \texttt{felipehl@ciencias.unam.mx} (F. Hernández-Lorenzana), \texttt{usagitsukinomx@yahoo.com.mx} (R. Sánchez-López)}
\end{center}

\begin{center}
	\rule{\textwidth}{0.5mm}
\end{center}

\begin{abstract}
Let $G$ be an edge-colored graph, a walk in $G$ is said to be a properly colored walk iff each pair of consecutive edges have different colors, including the first and the last edges in case that the walk be closed. Let $H$ be a graph possible with loops. We will say that a graph $G$ is an $H$-colored graph iff there exists a function $c:E(G)\longrightarrow V(H)$. A path $(v_1,\cdots,v_k)$ in $G$ is an $H$-path whenever $(c(v_1v_2),\cdots,$ $c(v_{k-1}v_k))$ is a walk in $H$, in particular, a cycle $(v_1,\cdots,v_k,v_1)$ is an $H$-cycle iff  $(c(v_1 v_2),\cdots,c(v_{k-1}v_k),$ $c(v_kv_1), c(v_1 v_2))$ is a walk in $H$. Hence, $H$ decide which color transitions are allowed in a walk, in order to be an $H$-walk. Whenever $H$ is a complete graph without loops, an $H$-walk is a properly colored walk, so $H$-walk is a more general concept. In this paper, we work with $H$-colored complete graphs, with restrictions given by an auxiliary graph. The main theorems give conditions implying that every vertex in an $H$-colored complete graph, is contained in an $H$-cycle of length 3 and in an $H$-cycle of length 4. As a consequence of the main results, we obtain some well-known theorems in the theory of properly colored walks.
\end{abstract}

\textbf{Keywords:} Edge-colored graph, $H$-cycle, Properly colored walk

\section{Introduction}

We assume that the reader is familiar with the standard terminology on graph theory, and for further notation and terminology not defined here, we refer the reader to~\cite{BJM}. In this paper we work with simple finite graphs. Whenever $G$ is a graph, $V(G)$ will denote the set of vertices and $E(G)$ the set of edges of $G$. The order of the graph $G$ is its number of vertices $|V(G)|$.

A graph $G$ of order $n$ is said to be pancyclic iff it contains a cycle of length $l$ for every $l$ in $\{3,\cdots,n\}$, and $G$ is called vertex-pancyclic whenever each vertex of $G$ belongs to a cycle of length $l$ for each $l$ in $\{3,\cdots,n\}$. The pancyclicity and the vertex-pancyclicity in graphs have been deeply studied since 1971, when Bondy conjectured in \cite{BJA} that almost every non-trivial condition implying that a graph $G$ is hamiltonian, also implies that $G$ is pancyclic. At least 127 papers have been published in these topics, see for example \cite{BFG}, \cite{BJA1} and \cite{BJA2}. Bondy's metaconjecture has been verified by adding some extra conditions, and since vertex-pancyclicity implies pancyclicity and pancyclicity implies hamiltonicity, this motivates the research of conditions implying the vertex-pancyclicity in a graph. In \cite{LCM}, Ming-Chu Li et al. proved that determining wheter a graph is pancyclic (or vertex-pancyclic) is an $NP$-complete problem, even for 3-connected cubic planar graph. 

We can think in the following application: a travel agency has offices in $n$ cities, between some of which it is possible to travel directly (avoiding any other city). To offer a large variety of trips, the tourism agency want to determine if for every $k$ in $\{3,\cdots,n\}$ there exists a tour starting and ending in the same city, and passing by each of $k$ cities exactly once in some order. Moreover, they want to determine if this is possible to do it regardless the departure city. Therefore, in terms of Graph Theory, we want to determine whether the graph associated with the problem is pancyclic or vertex-pancyclic.

Now we can consider the following application in relation to the means of transportation: for each two consecutive cities on a given tour, there exists several ways to travel (say bus, plane, train, boat, and so on), and according to the customer's preferences, there will be some transport transitions that should be avoided, and some others that will be needed. For example, if from city $A$ to city $B$ the customer traveled by plane, then from city $B$ to city $C$ the travel must be done by train, or another possibility is that the customer prefers to do the tour exclusively by bus. We can represent this situation with an edge-colored multigraph, where each vertex represents a city, and in order to make more visual the representation of this problem, we will have an edge with color $i$ between two different vertices $A$ and $B$, whenever we can travel from the city $A$ to the city $B$ directly by the $i$-th means of transportation. In this modify problem, it does not suffices to find a cycle of length $k$ for every $k$ in $\{3,\cdots,n\}$, but now we have to find a cycle (tour) of length $k$ according to the restrictions given by the color transitions in the obtained edge-colored multigraph (customer).

In this paper, we work in an edge-colored simple graph $G$ and we deal with the problem of finding conditions implying that, for each vertex $x$ of $G$ there exists an edge-colored cycle of length 3 (respectively 4) containing $x$ with some constraints given in the coloring of the graph.

A walk in an edge-colored graph is a properly colored walk, iff every two consecutive edges have different colors, including the first and last edges when the walk is closed. The theory of properly colored walks has relevance in Graph Theory and Algorithms \cite{HaR} and \cite{WoD}.  The study of applications on properly colored structures starts in 1891, with the remarkable paper by Petersen (see\cite{PeJ}). Properly colored walks have shown to be an effective way to model some real applications in different fields, as we can mention Genetic and Molecular biology \cite{DoD1}, \cite{DDT}, \cite{PPA}, Social Sciences \cite{CMM}, Engineering and Computer Science \cite{ASK}, \cite{SER}, \cite{TCL}, and Management Science \cite{XKH}, \cite{XLH}. There exist some classical results in the theory of properly colored walks, we have for example Kotzig's Theorem \cite{KoA}, which provides a characterization of edge-colored multigraphs, having properly colored closed Euler trails. As closed trails contain cycles, we can ask whether a properly colored trails contain properly colored cycles, and in general, ask for conditions implying the existence of properly colored cycles. Solving this question, in 1983 Grossman and Häggksvit \cite{GJH} proved Theorem~\ref{teo:Yeo} when $c=2$, and later Yeo proved it for every $c\ge 2$. Although the problem of determining the existence of properly hamiltonian cycles, in 2-edge-colored graphs is $NP$-complete (see \cite{BJG}), we can check whether an $c$-edge-colored graph have a properly colored cycle in polynomial time, using a recursive algorithm provided by Yeo's Theorem. Moreover, in \cite{GGK} the authors proved that a shortest properly colored cycle can be found in polynomial time. For a classical survey in this topic, see chapter 16 in \cite{BJG}, and a more recent survey can be found in \cite{BJG1} and \cite{LXM}.

\begin{teo}[\textbf{Yeo}, \cite{YeA}]\label{teo:Yeo} 
	Let $G$ be a $c$-edge-colored graph, $c\ge 2$, with no properly colored cycle. Then, $G$ has a vertex $z$ in $V(G)$, such that no connected component of $G-z$ is joined to $z$, with edges of more than one color. 
\end{teo}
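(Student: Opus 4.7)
My plan is to prove the contrapositive: assume that for every vertex $z \in V(G)$ there is a connected component of $G-z$ joined to $z$ by edges of at least two distinct colors, and from this assumption construct a properly colored cycle in $G$.

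The first step is to pick an arbitrary starting vertex $v_0$ and build a properly colored trail $W = v_0 v_1 v_2 \cdots$ greedily, of maximum possible length. Suppose we have arrived at vertex $v_i$ via an edge of color $\alpha_i$. By hypothesis $v_i$ admits a bad component $D_i$ of $G - v_i$, i.e.\ a component joined to $v_i$ by edges of at least two distinct colors; in particular, there is at least one edge $v_i u \in E(G)$ with $u \in D_i$ and color different from $\alpha_i$. I would use this to extend $W$, giving preference, whenever possible, to the case $D_i = C_i$, where $C_i$ denotes the component of $G-v_i$ containing $v_0$; this will ensure that the extension stays in a subgraph from which a return to $v_0$ is available through $C_i$.

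The second step is to terminate the construction and close the trail. Since $G$ is finite, the greedy extension must eventually stop, or else must revisit a vertex. If at some stage the current endpoint $v_k$ has its bad component equal to the component $C_k$ containing $v_0$, then by connectivity inside $C_k \cup \{v_k\}$ we obtain a properly colored $v_k$--$v_0$ path using an outgoing edge at $v_k$ of color different from $\alpha_k$; concatenating this with $W$ yields a properly colored closed walk through $v_0$. From such a closed walk I would extract a properly colored cycle (selecting, say, a shortest closed properly colored subwalk), which contradicts the hypothesis that $G$ has no properly colored cycle.

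The main obstacle is the closure step, specifically ensuring that the edge used to re-enter $v_0$ has color different from the color of the first edge $v_0 v_1$. I would handle this by exploiting the fact that the bad component $C_0$ of $v_0$ has edges to $v_0$ in at least two different colors: this gives me the freedom to choose the color of the first edge of $W$ after the closing color has been identified, so that proper coloring at $v_0$ is preserved. A secondary source of technical difficulty is the case where the greedy extension is blocked at some $v_i$ whose bad component is not the one containing $v_0$; there one must either reroute the trail inside that other bad component to build a properly colored cycle avoiding $v_0$ altogether, or backtrack along $W$ to a suitable earlier vertex and restart from there. Either branch yields the desired contradiction.
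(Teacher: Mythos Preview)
The paper does not prove Theorem~\ref{teo:Yeo}; it is quoted as a background result due to Yeo~\cite{YeA} and is only used to motivate the study of $H$-cycles. So there is no ``paper's own proof'' to compare your attempt against.

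Turning to your plan on its own merits: the contrapositive set-up and the idea of growing a maximal properly colored trail are sensible, but the closing argument has a genuine gap. The sentence ``by connectivity inside $C_k\cup\{v_k\}$ we obtain a properly colored $v_k$--$v_0$ path using an outgoing edge at $v_k$ of color different from $\alpha_k$'' is unjustified: connectivity of $C_k$ gives you \emph{some} $v_k$--$v_0$ path, but nothing in your hypotheses forces that path to be properly colored, nor does it control the color transitions where that path meets the trail $W$. This is exactly the heart of the matter, and it cannot be patched by adjusting the color of the first edge at $v_0$ after the fact. Likewise, your final paragraph concedes that when the bad component at the current endpoint does not contain $v_0$ you would ``either reroute \ldots\ or backtrack \ldots'', but neither branch is actually argued; this is precisely the delicate case.

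Yeo's actual proof avoids these difficulties by working with a \emph{longest properly colored path} (not a trail) and analysing the neighbours of an endpoint inside the component of the rest of the path; maximality then forces any differently colored edge from the endpoint to hit the path itself, and a careful choice among those edges produces a properly colored cycle directly, without ever needing to find an auxiliary properly colored return path through an arbitrary connected subgraph. If you want to repair your approach, switching from trails to a longest properly colored path and exploiting maximality at the endpoint is the missing idea.
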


In view of these theorems, it is natural to ask for conditions implying the existence of colored walks, with restrictions in the color transitions. In \cite{SDF}, Szachniuk et al. introduced what they called the Orderly Colored Longest Path problem (OCLE), that is: if $G$ is an edge-colored graph with colors in $C$, $P=(v_1,\cdots,v_n)$ is a path in $G$, and $O=\langle c_1,\cdots,c_k\rangle$ is a fixed sequence of colors for some colors $c_1,\cdots,c_k$ in $C$, then $P$ is an orderly colored path, if the colors of consecutive edges in $P$ follow the color sequence defined by $O$. Notice that, if $P$ is an orderly colored path following the color sequence $O$, and the length of $O$ equals 2 with the two colors of $O$ different, then $P$ is a properly colored path. For other applications motivating the study of colored paths with constraints in the color sequence, see \cite{SDF}.

The following concepts, which are more general than the previous one, were introduced in a work developed by Linek and Sands in \cite{LVS}, in the context of kernels in arc-colored directed graphs.

Let $H$ be a graph possibly with loops and $G$ be a graph. We say that $G$ is an $H$-colored graph whenever there exists a function $c:E(G)\rightarrow V(H)$. A path $(v_1,\cdots,v_n)$ in an $H$-colored graph $G$ is an $H$-path iff $(c(v_1 v_2),c(v_2 v_3),\cdots, $ $c(v_{n-1}v_n))$ is a walk in the graph $H$, in particular, a cycle $(v_1,\cdots,v_n,v_1)$ is an $H$-cycle whenever $(c(v_1 v_2),c(v_2 v_3),\cdots,c(v_{n-1}v_n),c(v_n v_1), c(v_1 v_2))$ is a walk in~$H$. Notice that, if $H$ is the complete graph without loops, then an $H$-path is a properly colored path. Moreover, the concept of $H$-path generalizes the concept of orderly colored path, because if $O=\langle c_1,\cdots,c_k \rangle$ is the sequence of colors given, then it is enough to consider any graph $H$ such that $\{c_1,\cdots, c_k\}\subseteq V(H)$ and $(c_1,\cdots,c_k,c_1)$ is a walk in $H$, to color the edges of $G$.

The study of the existence of certain $H$-walks in $H$-colored graphs, began in \cite{GRS} in a work entitled ``Some Conditions for the Existence of Euler $H$-trails". In \cite{GRS1}, the authors extended Theorem~\ref{teo:Yeo} in the context of $H$-coloring, so they found structural conditions implying the existence of $H$-cycles in an $H$-colored graph, although no information about the length of such a cycles was provided. In~\cite{GHS}, the authors gave conditions implying the existence of an $H$-cycle of length at least $\left\lceil\frac{|V(G)|}{3}\right\rceil+1$ in an $H$-colored graph $G$. An interesting auxiliary graph in working with $H$-colorings was defined by Kotzig in \cite{KoA} and proved be cumbersome, in \cite{GHS}, \cite{GRS} and \cite{GRS1}:

\begin{defi}[\cite{KoA}]		
	Let $H$ be a graph possibly with loops and $G$ be an $H$-colored graph with a fixed $H$-coloring $c:E(G)\longrightarrow V(H)$. For each non-isolated vertex $x$ of $G$, we denote by $G_x$ the graph defined as follows:
	\begin{enumerate}
		\item[(i)] $V(G_x)=\{e\in E(G):e\ \mbox{is incident with}\ x\}$.
		\item[(ii)] For two different vertices $a$ and $b$ in $V(G_x)$, $ab\in E(G_x)$ if and only if $c(a)c(b)\in E(H)$.
	\end{enumerate}
\end{defi}

Notice that for every non-isolated vertex $x$ in $V(G)$, $G_x$ is a simple graph.

Let $H$ be a graph possibly with loops, $G$ an $H$-colored graph, $U=(x_1,\cdots,$ $x_n)$ a walk in $G$ and $i$ in $\{2,\cdots, n-1\}$. We say that $x_i$ is an obstruction of $U$ whenever $c(x_{i-1}x_i)c(x_ix_{i+1})\notin E(H)$. When $W=(x_1,\cdots, x_{n-1},x_1)$ is a closed walk, $x_1$ is an obstruction of $W$ iff $c(x_{n-1}x_1)c(x_1x_2)\notin E(H)$. The interest of defining obstruction of a walk and the auxiliary graph $G_x$, is established in the following observation.

\begin{obs}\label{obs:equiv}
	Let $H$ be a graph possibly with loops, and $G$ be an $H$-colored graph, such that for every $x$ in $V(G)$, $G_x$ is a complete $k_x$-partite graph for some $k_x$ in $\mathbb{N}$. Suppose that $\{ux,vx\}$ is a subset of $E(G)$. The following statements are equivalent:
	\begin{enumerate}
		\item $ux$ and $vx$ are in different parts of the $k_x$-partition of $V(G_x)$.
		\item $ux$ and $vx$ are adjacent in $G_x$.
		\item $c(ux)c(vx)\in E(H)$.
		\item $x$ is not an obstruction of the path $(u,x,v)$.
		\item $(u,x,v)$ is an $H$-path in $G$.
	\end{enumerate}
\end{obs}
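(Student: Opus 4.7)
The plan is to verify that all five conditions coincide by unpacking the definitions in a short cycle of implications, say $(1)\Rightarrow (2)\Rightarrow (3)\Rightarrow (4)\Rightarrow (5)\Rightarrow (1)$. Since the statement is essentially a ``dictionary'' connecting the auxiliary graph $G_x$, the coloring $c$, the host graph $H$, and the notion of obstruction, no combinatorial argument is really required; the work consists of quoting the right definition at each step.

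For $(1)\Rightarrow (2)$, I would use the hypothesis that $G_x$ is complete $k_x$-partite: by definition of a complete multipartite graph, any two vertices lying in different parts are adjacent, so $ux$ and $vx$ are adjacent in $G_x$. For $(2)\Rightarrow (3)$, I would apply condition (ii) of the definition of $G_x$: $ux$ and $vx$ are adjacent in $G_x$ exactly when $c(ux)c(vx)\in E(H)$. For $(3)\Rightarrow (4)$, I would invoke the definition of obstruction applied to the walk $(u,x,v)$: $x$ is an obstruction iff $c(ux)c(xv)\notin E(H)$, so the condition $c(ux)c(vx)\in E(H)$ is exactly the negation. For $(4)\Rightarrow (5)$, I would note that, by definition, $(u,x,v)$ is an $H$-path iff $(c(ux),c(xv))$ is a walk in $H$, i.e.\ iff $c(ux)c(xv)\in E(H)$ (allowing a loop at $c(ux)$ in $H$ if the two colors coincide), which is precisely the statement that $x$ is not an obstruction.

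Finally, for $(5)\Rightarrow (1)$, I would close the cycle by running the previous equivalences in reverse: if $(u,x,v)$ is an $H$-path, then $c(ux)c(vx)\in E(H)$, hence by (ii) of the definition of $G_x$ the vertices $ux$ and $vx$ are adjacent in $G_x$; since $G_x$ is complete $k_x$-partite, adjacent vertices necessarily lie in different parts of the $k_x$-partition.

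The only point that requires mild care is the interpretation of ``walk'' in $H$ when $c(ux)=c(vx)$: a length-one walk at a single vertex $w$ is a walk in $H$ precisely when $H$ has a loop at $w$, which by convention means $ww\in E(H)$, so the equivalence $(3)\Leftrightarrow(5)$ remains valid in that degenerate case. No other step presents an obstacle, since each implication is a literal application of the relevant definition (multipartite adjacency, the defining rule of $G_x$, obstruction, and $H$-path).
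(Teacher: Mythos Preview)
Your proposal is correct: each implication is indeed a direct unfolding of the definitions of complete multipartite graph, of $G_x$, of obstruction, and of $H$-path. The paper does not supply a proof for this observation at all---it is stated as self-evident from the definitions---so your cycle of implications is precisely the intended (and only reasonable) justification.
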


As a direct consequence of Observation~\ref{obs:equiv} and the definition of $H$-cycle, we have the following result.

\begin{obs}\label{obs:equiv4}
	Let $H$ be a graph possibly with loops, and $G$ be an $H$-colored graph, such that for every $x$ in $V(G)$, $G_x$ is a complete $k_x$-partite graph for some $k_x$ in $\mathbb{N}$. Suppose that $C=(u_1,\cdots,u_{n-1},u_n,u_1)$ is a cycle in $G$. The following statements are equivalent:
	\begin{enumerate}
		\item $C$ is an $H$-cycle in $G$.
		\item $(c(u_1 u_2),\cdots,c(u_{n-1} u_n),c(u_n u_1),c(u_1 u_2))$ is a walk in $H$.
		\item $u_1,\cdots,u_{n}$ are not obstructions of the cycle $C$.
		\item $u_{i-1}u_i$ and $u_{i+1}u_i$ are in different parts of the $k_{u_i}$-partition of $V(G_{u_i})$ for every $i$ in $\{1,\cdots,n\}$ (the subindices are taken modulo $n$).
	\end{enumerate}
\end{obs}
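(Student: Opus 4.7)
The plan is to prove the chain of equivalences $(1) \Leftrightarrow (2) \Leftrightarrow (3) \Leftrightarrow (4)$, since each step is essentially the unpacking of a definition or a direct application of Observation~\ref{obs:equiv}. The equivalence $(1) \Leftrightarrow (2)$ is nothing more than the definition of an $H$-cycle stated in the introduction, applied to the particular cycle $C = (u_1, \ldots, u_n, u_1)$; no argument is needed beyond citing that definition.

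For $(2) \Leftrightarrow (3)$, I would recall that a sequence of vertices of $H$ is a walk in $H$ if and only if every two consecutive terms form an edge of $H$. Thus the sequence in (2) is a walk in $H$ iff $c(u_{i-1} u_i)\, c(u_i u_{i+1}) \in E(H)$ for every $i \in \{1, \ldots, n\}$, with indices taken modulo $n$. In particular, the transition at $u_1$ corresponds precisely to the ``wrap-around'' pair $c(u_n u_1)\, c(u_1 u_2)$, which is why $c(u_1 u_2)$ appears twice in the sequence displayed in (2). By the definition of obstruction (both the internal-vertex version and the first-vertex-of-a-closed-walk version), these $n$ conditions say exactly that none of $u_1, \ldots, u_n$ is an obstruction of $C$.

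Finally, for $(3) \Leftrightarrow (4)$, I would apply Observation~\ref{obs:equiv} at each vertex $u_i$ of the cycle. Taking $u = u_{i-1}$, $x = u_i$, $v = u_{i+1}$ (indices modulo $n$), the equivalence of items 1 and 4 of that observation yields: the edges $u_{i-1} u_i$ and $u_{i+1} u_i$ lie in different parts of the $k_{u_i}$-partition of $V(G_{u_i})$ iff $u_i$ is not an obstruction of the subpath $(u_{i-1}, u_i, u_{i+1})$ of $C$, which by definition is the same as $u_i$ not being an obstruction of $C$. Ranging $i$ over $\{1,\ldots,n\}$ gives $(3) \Leftrightarrow (4)$. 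The main obstacle, if any, is purely notational: one must keep track of the fact that ``no obstruction at $u_1$'' involves both edges $u_n u_1$ and $u_1 u_2$, since $u_1$ is simultaneously the first and the last vertex of the closed walk. Once this bookkeeping is handled, Observation~\ref{obs:equiv} does all the real work, confirming the claim that Observation~\ref{obs:equiv4} is a direct consequence of it together with the definition of $H$-cycle.
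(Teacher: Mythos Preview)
Your argument is correct and matches the paper's approach: the paper does not give a detailed proof but simply states that Observation~\ref{obs:equiv4} is a direct consequence of Observation~\ref{obs:equiv} and the definition of $H$-cycle, which is exactly what your chain $(1)\Leftrightarrow(2)\Leftrightarrow(3)\Leftrightarrow(4)$ unpacks.
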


In~\cite{GRS1} was proved that, given any graphs $H$ and $G$, there exists an $H$-coloring of $G$ such that, for every $x$ in $V(G)$, either $G_x$ is a complete bipartite graph or $E(G_x)=\emptyset$.

In this paper, we work with $H$-colored complete graphs with restrictions given by the auxiliary graph $G_x$. The main results are the following: \\

\begin{teo}\label{teo:3Hciclo}
	Let $H$ be a graph possibly with loops and $G$ be an $H$-colored complete graph of order $n$, with $n\ge3$, such that for every $x$ in $V(G)$, $G_x$ is a complete $k_x$-partite graph for some $k_x$ in $\mathbb{N}$. Suppose that 
	\begin{enumerate}
		\item For every $x$ in $V(G)$, $k_x\ge \frac{n+1}{2}$,
		\item $G$ does not contain a cycle of length 4 with exactly 3 obstructions.
	\end{enumerate}
	Then each vertex of $G$ is contained in an $H$-cycle of length 3.
\end{teo}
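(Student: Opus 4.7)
The plan is to proceed by contradiction: fix $x \in V(G)$ and assume $x$ lies in no $H$-cycle of length~$3$. Using condition~$1$, since $|V(G_x)| = n-1$ is partitioned into at least $(n+1)/2$ classes, at least $2k_x - (n-1) \ge 2$ of them must be singletons; I will fix a vertex $u$ such that $\{xu\}$ is such a singleton class of the $k_x$-partition of $V(G_x)$. By Observation~\ref{obs:equiv}, $x$ is never an obstruction of a triangle $(x, u, v, x)$. The contradiction hypothesis together with Observation~\ref{obs:equiv4} then yields, for each $v \ne x, u$, that either $v \in P$ (meaning $uv$ and $ux$ lie in the same part of $V(G_u)$, so $u$ is an obstruction) or $vu$ and $vx$ lie in the same part of $V(G_v)$ (so $v$ is an obstruction). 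The size of $P$ is bounded by the part of $V(G_u)$ containing $ux$, of size at most $n - k_u \le (n-1)/2$, so $|P| \le (n-3)/2$, and $R' := V(G) \setminus (\{x,u\} \cup P)$ has size at least $(n-1)/2$, with every $v \in R'$ satisfying ``$vu$ and $vx$ lie in the same part of $V(G_v)$''.

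The next step will apply condition~$2$ to the 4-cycle $C = (u, v, x, w, u)$ for arbitrary $v, w \in R'$: both $v$ and $w$ are forced to be obstructions of $C$, so ``not exactly $3$ obstructions'' forces $u$ and $x$ to be both obstructions of $C$ or neither. Translating via Observation~\ref{obs:equiv4} yields, for all $v, w \in R'$,
\[
    uv \text{ and } uw \text{ share a part of } V(G_u) \iff xv \text{ and } xw \text{ share a part of } V(G_x),
\]
so $R'$ decomposes into common classes $T_1, \dots, T_m$ under both partitions.

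I then split on whether $m = 1$ or $m \ge 2$. If $m = 1$, all of $R'$ sits in a single part $A$ of $V(G_x)$; the bounds $|R'| \ge (n-1)/2$ and $|A| \le n - k_x \le (n-1)/2$ force equalities, and the other $k_x - 1 \ge (n-1)/2$ parts of $V(G_x)$ are all singletons. Repeating the above construction with each such singleton $u'$ in place of $u$ forces the resulting $R'$ to equal $A$, so for every $v \in A$ and every singleton $u' \ne v$ at $x$, $vu'$ and $vx$ share a part of $V(G_v)$. That part of $V(G_v)$ would then contain $vx$ together with $vu'$ for each of the $(n-1)/2$ singletons at $x$, i.e.\ at least $(n+1)/2$ edges, contradicting the bound $n - k_v \le (n-1)/2$ from condition~$1$. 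If instead $m \ge 2$, pick $v \in T_1$ and $w \in T_2$; $x$ is a non-obstruction of $(x, v, w, x)$, so by hypothesis one of $v, w$ is an obstruction, WLOG $v$, whence $vw$, $vx$, $vu$ all share a part of $V(G_v)$. To close the argument I will construct a 4-cycle with exactly three obstructions: using a second element $v' \in T_1 \setminus \{v\}$ (existing once an analogous counting in the part of $V(G_v)$ containing $vx$ forces $|T_1| \ge 2$), a 4-cycle such as $(x, v', v, w, x)$ already has two forced obstructions (at $x$, since $v$ and $v'$ share a part of $V(G_x)$, and at $v$); the biconditional above together with the non-$H$-cycle condition for the triangle $(x, v', w, x)$ will then force exactly one of $v', w$ to be obstructed.

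The main obstacle will be this last step: condition~$2$ only forbids \emph{exactly} three obstructions, so the extra vertex $v'$ must be chosen so that a third obstruction is forced without a fourth appearing. The delicate part is combining the biconditional from Step~$2$ with the triangle-obstruction information and the part-size bounds of $V(G_v)$, $V(G_{v'})$, and $V(G_w)$ simultaneously; this is where I expect the proof to be technically most involved.
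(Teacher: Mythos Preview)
Your opening is sound and in fact matches the paper's start: both arguments locate singleton parts of $V(G_x)$ and use that every triangle through such a singleton is obstructed elsewhere. From that point, however, the paper proceeds differently. It takes the set $D$ of \emph{all} singleton classes at the vertex $v$, observes that $V(D)$ has the $H$-dependency property with respect to $v$, and applies Proposition~\ref{prop:hdep}: a tournament argument produces some $a\in V(D)$ with $l_a^{D}\le \frac{r+1}{2}$ such that $va$ already lies in one of the first $l_a^{D}$ parts of $V(G_a)$. Condition~2 is then invoked once, to show that each of the $s=k_v-r$ non-singleton classes $N_j^v$ contributes at most one additional part of $V(G_a)$, giving $k_a\le l_a^{D}+s\le \frac{n}{2}$, contrary to hypothesis~1.

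Your route through a single singleton $u$ and the biconditional on $R'$ is attractive, but the $m\ge 2$ case does not close as written. In the cycle $(x,v',v,w,x)$ the two edges at $x$ are $xv'$ and $xw$, which lie in \emph{different} parts of $V(G_x)$ since $v'\in T_1$ and $w\in T_2$; so $x$ is \emph{not} an obstruction there, contrary to what you claim. Presumably you intend $(x,v',w,v,x)$, where indeed $x$ and $v$ are forced obstructions, so condition~2 says $v'$ and $w$ are both obstructed or both not. But the triangle $(x,v',w,x)$ tells you that $v'x,v'w$ share a part of $V(G_{v'})$ \emph{or} $wx,wv'$ share a part of $V(G_w)$, whereas for the $4$-cycle the condition at $w$ is whether $wv$ and $wv'$ share a part---and nothing you have established links $wv$ to $wx$. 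So neither ``both'' nor ``neither'' is excluded. Moreover, the ``analogous counting'' you cite does not force $|T_1|\ge 2$: the part of $V(G_v)$ containing $vx$ absorbs edges $vw'$ with $w'$ ranging over \emph{other} classes $T_j$, which says nothing about further elements of $T_1$. The tournament mechanism of Proposition~\ref{prop:hdep} is exactly the device that avoids this case split: it converts the pairwise ``one of the two is an obstruction'' relation on all singletons into a single vertex $a$ at which many edges are forced into one part, directly contradicting $k_a\ge\frac{n+1}{2}$.
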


\begin{teo}\label{teo:4Hciclo-1}
	Let $H$ be a graph possibly with loops and $G$ be an $H$-colored complete graph of order $n$, with $4\leq n<9$, such that for every $x$ in $V(G)$, $G_x$ is a complete $k_x$-partite graph for some $k_x$ in $\mathbb{N}$. Suppose that 
	\begin{enumerate}
		\item For every $x$ in $V(G)$, $k_x\ge \frac{n+1}{2}$, and
		\item $G$ does not contain a cycle of length 4 with exactly 3 obstructions.
	\end{enumerate}
	Then each vertex of $G$ is contained in an $H$-cycle of length 4.
\end{teo}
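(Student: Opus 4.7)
The plan is to bootstrap from Theorem~\ref{teo:3Hciclo}: first produce an $H$-3-cycle $T=(x,y,z,x)$ through the given vertex $x$, whose existence is guaranteed by Theorem~\ref{teo:3Hciclo} under exactly our hypotheses, and then enlarge $T$ into an $H$-4-cycle through $x$ by inserting a fourth vertex $w\in W:=V(G)\setminus\{x,y,z\}$. The set $\{x,y,z,w\}$ spans exactly three 4-cycles through $x$, namely
\[
C_1(w)=(x,y,w,z,x),\quad C_2(w)=(x,y,z,w,x),\quad C_3(w)=(x,w,y,z,x),
\]
and the goal is to show that for some $w\in W$ at least one of the $C_i(w)$ is an $H$-4-cycle.

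Arguing by contradiction, assume that no $C_i(w)$ is ever an $H$-4-cycle. By Observation~\ref{obs:equiv4} each $C_i(w)$ then has at least one obstruction, and by hypothesis~2 at most two. I will encode potential obstructions by $0/1$ indicators attached to each $w$: six ``external'' indicators, one for each ordered pair $(v,u)$ of distinct vertices of $\{x,y,z\}$ (recording whether $vw$ lies in the part of $G_v$ containing $vu$), and three ``internal'' indicators at $w$ (for each pair in $\{wx,wy,wz\}$, recording whether the two edges share a part of $G_w$). Three families of constraints are then at my disposal: hypothesis~1 bounds every part of $G_v$ by $\frac{n-1}{2}$, so each external indicator is positive for at most $\frac{n-3}{2}$ values of $w\in W$; the complete-multipartite structure of $G_w$ constrains the triple of internal indicators to lie in $\{(0,0,0),(1,0,0),(0,1,0),(0,0,1),(1,1,1)\}$, and the last profile requires a part of size at least $3$ in $G_w$, which by hypothesis~1 is only possible when $n\geq 7$; and hypothesis~2 forces, for every $w$ and every $i$, $\mathrm{obs}(C_i(w))\leq 2$, linking internal and external indicators per~$w$.

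The contradiction should then be extracted via a double count. Summing $\mathrm{obs}(C_i(w))\geq 1$ over $i\in\{1,2,3\}$ and $w\in W$ yields a lower bound of $3(n-3)$ on the total obstructions, while summing the external indicators across all $w$ gives an upper bound of $6\cdot\tfrac{n-3}{2}=3(n-3)$ on the external contribution alone. The internal sum must therefore pick up the slack in a very restricted way, which, combined with the per-$w$ hypothesis~2 constraint, pins down the possible profiles of each $w$ (for example, when the internal profile at $w$ is $(0,0,0)$ the three external pairs must each contribute exactly one, forcing $w$ into one of only two rigid colour profiles determined by $T$). An explicit case analysis on these profiles together with the parity of $\frac{n-1}{2}$ should rule out all configurations when $n<9$. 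The main obstacle will be the case $n\in\{7,8\}$ with some $w$ exhibiting the $(1,1,1)$ profile, meaning that $wx, wy, wz$ all share a part of $G_w$; resolving it is expected to require auxiliary arguments, namely applying Theorem~\ref{teo:3Hciclo} to $w$ to obtain an $H$-3-cycle through $w$ whose structure clashes with that three-way coincidence, and invoking hypothesis~2 on 4-cycles through $x$ that use a second vertex from $W$ (such as $(x,y,w',w,x)$ for $w'\in W\setminus\{w\}$). The bound $n<9$ should enter precisely at this combinatorial step.
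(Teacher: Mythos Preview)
Your framework is sensible and the counting is set up correctly, but the double count as stated does not produce a contradiction: you get a lower bound $\sum_{i,w}\mathrm{obs}(C_i(w))\geq 3(n-3)$ and an \emph{upper} bound of $3(n-3)$ on the external contribution, so all this says is that the internal sum is nonnegative. Everything is thus pushed onto the per-vertex profile analysis, and here you have only worked out one case (internal profile $(0,0,0)$, where you correctly isolate two rigid external patterns). The remaining profiles, and especially the interaction between the profiles of different $w$'s, are where the actual argument lives, and this is precisely the part you have not carried out. For $n\in\{7,8\}$ your suggested remedies are not enough as stated: applying Theorem~\ref{teo:3Hciclo} to $w$ only gives \emph{some} $H$-triangle through $w$, which need not meet $\{x,y,z\}$ at all, so it does not obviously clash with the three-way coincidence at $w$; and bringing in $4$-cycles with two vertices of $W$ introduces a fresh layer of indicators and constraints rather than closing the argument.

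The paper proceeds quite differently. It dispatches $n=4,5$ by an elementary count on part sizes (for $n=5$ there are at most two $4$-cycles through a given vertex that can fail to be $H$-cycles), and it handles $n=6$ and $n=8$ by the simple trick you are missing: delete any vertex other than $v$ and observe that $G-z$ still satisfies the hypotheses for $n-1$, reducing to the odd case. The substantive work is only $n=7$, where the paper does start from an $H$-triangle through $v$ exactly as you do, but then runs a concrete case analysis: it locates a vertex $w$ whose edges to two of the triangle vertices lie in ``new'' parts, proves that $vw,xw,yw$ must fall into a single part of $G_w$ (your $(1,1,1)$ scenario), and then brings in one or two further vertices of $W$, ultimately forcing four edges at some vertex into a single part, which violates $k_\bullet\geq 4$. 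So what you identify as ``the main obstacle'' is in fact the entire proof for $n<9$, and resolving it requires precisely the kind of explicit vertex-by-vertex argument the paper gives rather than a global double count.
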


\begin{teo}\label{teo:4Hciclo-2}
	Let $H$ be a graph possibly with loops and $G$ be an $H$-colored complete graph of order $n$, with $n\ge9$, such that for every $x$ in $V(G)$, $G_x$ is a complete $k_x$-partite graph for some $k_x$ in $\mathbb{N}$. Suppose that 
	\begin{enumerate}
		\item For every $x$ in $V(G)$, $k_x\ge \frac{n+1}{2}$,
		\item $G$ does not contain a cycle of length 3 with exactly 2 obstructions, and
		\item $G$ does not contain a cycle of length 4 with exactly 3 obstructions.
	\end{enumerate}
	Then each vertex of $G$ is contained in an $H$-cycle of length~4.
\end{teo}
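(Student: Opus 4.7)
Fix a vertex $v \in V(G)$. The first step is to invoke Theorem~\ref{teo:3Hciclo}: hypotheses 1 and 3 of the present theorem are exactly hypotheses 1 and 2 of Theorem~\ref{teo:3Hciclo}, so $v$ lies in an $H$-triangle $T = (v,a,b,v)$. The plan is then to produce an $H$-cycle of length~$4$ through~$v$ by inserting a fourth vertex $c \in V(G)\setminus\{v,a,b\}$ into~$T$ in one of the three natural ways
\[
C_{1}(c)=(v,c,a,b,v),\qquad C_{2}(c)=(v,a,c,b,v),\qquad C_{3}(c)=(v,a,b,c,v),
\]
and to argue by contradiction that some~$c$ must work for at least one of them.

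Assume, for contradiction, that for every $c \in V(G)\setminus\{v,a,b\}$ none of $C_{1}(c),C_{2}(c),C_{3}(c)$ is an $H$-cycle. Writing $[u]_{x}$ for the part of the partition $P_{x}$ of $V(G)\setminus\{x\}$ (induced by $G_{x}$) that contains~$u$, the fact that $T$ is already an $H$-triangle automatically rules out one of the four possible obstructions in each $C_{i}(c)$, and by Observation~\ref{obs:equiv4} the remaining failure becomes a clean Boolean statement; for instance $C_{3}(c)$ fails iff $c \in [a]_{v}\cup [a]_{b}$ or $(b,c,v)$ is not an $H$-path, and analogous descriptions hold for $C_{1}$ and $C_{2}$.

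Next I would apply condition~3 to each of the three candidate 4-cycles and condition~2 to each of the triangles $(v,a,c,v),(v,b,c,v),(a,b,c,a)$. This forces tight restrictions on the nine Boolean indicators encoding the position of~$c$ inside each of $P_{v},P_{a},P_{b}$ (three options each), together with the pattern of $\{v,a,b\}$ inside $P_{c}$ (five possibilities, depending on how $cv,ca,cb$ distribute among the parts of $G_{c}$). A case analysis then reduces the ``bad'' values of $c$ to a short explicit list of configurations, each one forcing~$c$ into the intersection of two or three of the six parts $[a]_{v},[b]_{v},[v]_{a},[b]_{a},[v]_{b},[a]_{b}$.

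The final step is a counting estimate. The bound $k_{x}\ge (n+1)/2$ yields $|[u]_{x}|\le n-k_{x}\le (n-1)/2$, and the sharper estimate $|[a]_{v}\cup [b]_{v}| \le n-k_{v}+1 \le (n+1)/2$ (because $[a]_{v},[b]_{v}$ are two of at least $(n+1)/2$ nonempty parts of $G_{v}$), with analogues for $P_{a}$ and $P_{b}$, is what makes the counting work. For $n \ge 9$ these bounds are strong enough to force the total number of bad~$c$ to be strictly less than $n-3=|V(G)\setminus\{v,a,b\}|$, giving the desired contradiction. I expect the main obstacle to be the case analysis itself: conditions~2 and~3 have to be threaded together with the part-size bounds in an interlocking way in order to eliminate every configuration, and the threshold $n\ge 9$ enters only at this final counting step, which is consistent with the smaller regime having to be handled separately in Theorem~\ref{teo:4Hciclo-1}.
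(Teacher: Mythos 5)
Your opening move matches the paper's: both proofs start by applying Theorem~\ref{teo:3Hciclo} to place $v$ on an $H$-triangle. But from there your route diverges, and it has a genuine gap at the final counting step. Call a vertex $c$ \emph{self-obstructed} if the three edges $cv$, $ca$, $cb$ all lie in a single part of the $k_c$-partition of $V(G_c)$. Every such $c$ is an obstruction of all three insertions $C_1(c),C_2(c),C_3(c)$, hence is ``bad'' regardless of where $cv$, $ca$, $cb$ sit inside the partitions at $v$, $a$, $b$. Your size estimates $|P_i^x|\le n-k_x$ and $|P_i^x\cup P_j^x|\le n-k_x+1$ only control parts of $G_v$, $G_a$, $G_b$; they say nothing about how many vertices $c$ are self-obstructed, because that is a condition on the partition at $c$ itself, and the hypothesis $k_c\ge\frac{n+1}{2}$ does not prevent the three particular edges $cv,ca,cb$ from sharing a part. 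Hypothesis~2 (no triangle with exactly two obstructions) applied to the triangles $(v,c,a,v)$, $(v,c,b,v)$, $(a,c,b,a)$ only forces each self-obstructed $c$ either into an intersection of two of your six sets or \emph{entirely outside all six of them}; the latter subfamily is exactly the one your union bound cannot touch, and it can a priori have up to $n-3$ elements.

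This is precisely the set the paper spends most of its effort on. It shows (Claims 5--6 in Case 2, Claims 1--3 in Case 1) that the collection $A$ of such vertices has the $H$-dependence property with respect to $v$: for any two $a,a'\in A$, one of them obstructs the path from $v$ through the pair, since otherwise $(w,a,a',v,w)$ would be an $H$-cycle of length $4$ through $v$ --- note this uses $4$-cycles containing \emph{two} vertices outside the triangle, which your scheme of inserting a single vertex into $T$ never examines. Proposition~\ref{prop:hdep} (the tournament/out-degree argument) then extracts a single vertex $a\in A$ whose edges into $A$ occupy at most $\frac{|A|+1}{2}$ parts of $G_a$, and a further count (Claims 4 and 7, again leaning on hypotheses 2 and 3 applied to $4$-cycles with two outside vertices) shows $k_a\le l_a^D+s$ with $s\le 3$, contradicting $k_a\ge\frac{n+1}{2}$. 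So the missing idea in your proposal is a mechanism --- some global, pairwise argument on the set of self-obstructed vertices --- replacing the per-vertex union bound; without it the contradiction does not close for any $n$.
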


As a consequence of Theorems~\ref{teo:4Hciclo-1}~and~\ref{teo:4Hciclo-2} we have the following corollary.

\begin{cor}\label{cor:4Hciclo}
	
	Let $H$ be a graph possibly with loops and $G$ be an $H$-colored complete graph of order $n$, with $n\ge4$, such that for every $x$ in $V(G)$, $G_x$ is a complete $k_x$-partite graph for some $k_x$ in $\mathbb{N}$. Suppose that 
	\begin{enumerate}
		\item For every $x$ in $V(G)$, $k_x\ge \frac{n+1}{2}$,
		\item $G$ does not contain a cycle of length 3 with exactly 2 obstructions, and
		\item $G$ does not contain a cycle of length 4 with exactly 3 obstructions.
	\end{enumerate}
	Then each vertex of $G$ is contained in an $H$-cycle of length~4.
\end{cor}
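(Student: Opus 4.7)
The plan is to prove Corollary~\ref{cor:4Hciclo} by a direct case split on the order $n$ of $G$, applying one of the two preceding theorems in each case. Since the corollary explicitly unifies the conclusions of Theorems~\ref{teo:4Hciclo-1} and~\ref{teo:4Hciclo-2} over the full range $n\ge 4$, the natural approach is simply to verify that in each subrange of $n$ the hypotheses of the corollary imply the hypotheses of the relevant theorem.

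First, I would handle the case $4\le n\le 8$. In this range, conditions (1) and (3) of the corollary are precisely the two hypotheses of Theorem~\ref{teo:4Hciclo-1}; condition (2) of the corollary, concerning 3-cycles with exactly 2 obstructions, is not required by Theorem~\ref{teo:4Hciclo-1} and can simply be discarded for this range. Consequently, Theorem~\ref{teo:4Hciclo-1} applies and yields that every vertex of $G$ lies in an $H$-cycle of length 4.

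Next, for $n\ge 9$, the three hypotheses of the corollary coincide exactly with the three hypotheses of Theorem~\ref{teo:4Hciclo-2}, so Theorem~\ref{teo:4Hciclo-2} applies directly and again every vertex of $G$ lies in an $H$-cycle of length 4. Since the two cases $4\le n\le 8$ and $n\ge 9$ partition the range $n\ge 4$, the corollary follows.

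There is no substantial obstacle here: the statement is essentially a uniform repackaging of the two theorems, obtained by taking the stronger set of hypotheses (those of Theorem~\ref{teo:4Hciclo-2}) across the entire range $n\ge 4$. The only point worth noticing is that condition (2) is a harmless redundancy in the small-$n$ case, where Theorem~\ref{teo:4Hciclo-1} already delivers the conclusion without it; accepting this mild redundancy is what allows a single clean statement valid for all $n\ge 4$.
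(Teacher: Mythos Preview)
Your proposal is correct and matches the paper's own treatment: the corollary is stated explicitly as a direct consequence of Theorems~\ref{teo:4Hciclo-1} and~\ref{teo:4Hciclo-2}, and your case split on $4\le n\le 8$ versus $n\ge 9$ is exactly the intended argument. There is nothing to add.
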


Let $H$ be a graph possibly with loops and $G$ be an $H$-colored graph. We say that $G$ is $H$-pancyclic iff it contains an $H$-cycle of length $l$ for each $l$ in $\{3,\cdots,n\}$, and $G$ is said to be $H$-vertex pancyclic whenever each vertex of $G$ is contained in an $H$-cycle of length $l$ for every $l$ in $\{3,\cdots,n\}$.

The main contribution of this paper is to begin the study of $H$-vertex-pancyclicity for $H$-colored graphs. Although we are considering small cycle lengths in $H$-colored complete graphs, our investigations have underlined that these results are not easy to proof. As a matter of fact, several authors agree that, in the investigation of vertex-pancyclicity of a graph, the consideration of small cycle lengths is the most difficult part (see \cite{RST}). Moreover, these are the first essential steps in order to obtain a result implying the $H$-vertex-pancyclicity in an $H$-colored graph.

This paper is organized as follows: in section 2, we show the basic concepts and notation which will be useful in the developing of this work. In section 3, we exhibit some structural properties on the graph $G_x$, and we introduce the $H$-dependency property in an $H$-colored graph, which will be essential in order to prove the main results. In section 4, we prove the main theorems, and we exhibit explicit examples showing that the hypotheses of the first main theorem are tight. Finally, as a direct consequence of the main results, we obtain some classical theorems about the existence of properly colored cycles.

Given a digraph $D$ and a vertex $v$ in $V(D)$, we denote by $\delta^+(v)$ the out-degree of the vertex $v$, and by $\Delta^{+}(D)$ the maximum out-degree of $D$. 

We need the following result.

\begin{prop}[\cite{BJG}]\label{prop:torneoexgrad}
	If $T$ is a tournament of order $n$, with $n\ge1$, then $\Delta^{+}(T)\ge \frac{n-1}{2}$.
\end{prop}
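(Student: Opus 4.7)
The plan is to use a straightforward averaging argument on the out-degrees of the vertices of $T$. First I would observe that in a tournament $T$ of order $n$, every pair of distinct vertices is joined by exactly one arc, so the total number of arcs of $T$ equals $\binom{n}{2}=\frac{n(n-1)}{2}$. Since each arc contributes exactly one unit to the sum of out-degrees (via its tail), this yields the identity $\sum_{v\in V(T)}\delta^{+}(v)=\frac{n(n-1)}{2}$.

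Next, I would divide this equality by $n$ to conclude that the average value of $\delta^{+}(v)$ over $v\in V(T)$ equals $\frac{n-1}{2}$. Since the maximum of a finite collection of real numbers is at least its average, it follows immediately that $\Delta^{+}(T)\ge \frac{n-1}{2}$, which is precisely the stated inequality.

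There is no real obstacle here: the argument is essentially one line of arithmetic. Equivalently, one could argue by contradiction: if every vertex $v$ of $T$ satisfied $\delta^{+}(v)<\frac{n-1}{2}$, then summing over $v\in V(T)$ would give $\sum_{v\in V(T)}\delta^{+}(v)<\frac{n(n-1)}{2}$, contradicting the arc count computed above. The only boundary case worth noting is $n=1$, where $T$ has no arcs, $\Delta^{+}(T)=0$ and $\frac{n-1}{2}=0$, so the inequality holds trivially.
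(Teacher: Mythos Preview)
Your argument is correct: the standard averaging (pigeonhole) argument on out-degrees gives exactly $\sum_{v\in V(T)}\delta^{+}(v)=\binom{n}{2}$, hence some vertex has out-degree at least $\frac{n-1}{2}$. The paper does not supply its own proof of this proposition---it is simply quoted from \cite{BJG}---so there is nothing further to compare.
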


\section{Terminology and Notation}

Let $G$ be a graph. In the rest of paper we will denote by: $N_G(u)$ the neighborhood and of $v$, $\delta_G(v)$ the degree of $v$, for $X\subseteq V(G)$, $G[X]$ the subgraph of $G$ induced by $X$, $G-X$ the subgraph of $G$ induced by $V(G)-X$, and if $X=\{a\}$, we write $G-a$ instead of $G-\{a\}$. If the graph $G$ is understood, we omit the subindex $G$.

A walk is a sequence $W=(v_0,v_1,\cdots,v_k)$ such that $v_i v_{i+1}\in E(G)$ for every $i$ in $\{0,1,\cdots,k-1\}$. The number $k$ is the length of $W$, denoted by $\ell(W)$. If $v_0=v_k$, then we say that $W$ is a closed walk. We say that the walk $W$ is a path iff $v_i\neq v_j$ for every $\{i,j\}$ subset of $\{0,1,\cdots,k\}$. A closed walk $(v_0,v_1,\cdots, v_k,v_0)$ is a cycle iff $k\ge 2$ and $(v_0,v_1,\cdots,v_k)$ is a path.

A subset $I$ of $V(G)$ is independent iff the subgraph $G[I]$ has no edges. For a fixed positive integer $k$, we say that a graph $G$ is a $k$-partite graph iff there exists a partition $\{V_1,\cdots,V_k\}$ of $V(G)$ where each $V_i$ is an independent set. Moreover, a $k$-partite graph with $\{V_1,\cdots,V_k\}$ a partition of $V(G)$ into independent sets, is said to be a complete $k$-partite graph iff for every $x$ in $A_i$ and for every $y$ in $A_j$, $x$ and $y$ are adjacent in $G$, for every $\{i,j\}$ subset of $\{1,\cdots,k\}$, with $i\neq j$.

\section{Previous results}

In order to make feasible the proof of the main theorems, we show some structural properties on the graph $G_x$ and we introduce some extra notation.

\begin{obs}\label{obs:kxinducida}
	Let $H$ be a graph possibly with loops, $G$ be an $H$-colored graph without isolated vertices, and $D$ an induced (by $V(D)$) subgraph of $G$. If for every $x$ in $V(G)$, $G_x$ is a complete $k_x$-partite graph for some $k_x$ in $\mathbb{N}$, then for every $x$ in $V(D)$, $D_x$ is a complete $l_x$-partite graph for some $l_x$ in $\mathbb{N}$. Moreover, if $\{P_1^x,P_2^x,\cdots ,P_{k_x}^x\}$ is the $k_x$-partition of $V(G_x)$ into independent sets, then $\{P_i^x\cap V(D_x): P_i^x\cap V(D_x)\neq \emptyset, i\in\{1,2,\cdots, k_x\}\}$ is the $l_x$-partition of $V(D_x)$ into independent sets.
\end{obs}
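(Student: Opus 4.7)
The plan is to observe that, because $D$ is an induced subgraph of $G$, the graph $D_x$ coincides with the subgraph of $G_x$ induced by the vertex set $V(D_x)$, and then to invoke the elementary fact that an induced subgraph of a complete multipartite graph is itself complete multipartite, with parts obtained by intersecting the old parts with the new vertex set (and discarding any empty intersections).

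First I would verify the key identity $D_x = G_x[V(D_x)]$. Since $D$ is induced in $G$, every edge of $D$ incident with $x$ is an edge of $G$ incident with $x$, so $V(D_x)\subseteq V(G_x)$. For any two elements $a,b\in V(D_x)$, the adjacency criterion in $D_x$ is $c(a)c(b)\in E(H)$, which is identical to the adjacency criterion in $G_x$; hence $ab\in E(D_x)$ if and only if $ab\in E(G_x)$, establishing the claimed equality.

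Next, I would set $\mathcal{P}=\{P_i^x\cap V(D_x): i\in\{1,\ldots,k_x\},\ P_i^x\cap V(D_x)\neq\emptyset\}$ and let $l_x=|\mathcal{P}|$. Since $\{P_1^x,\ldots,P_{k_x}^x\}$ partitions $V(G_x)\supseteq V(D_x)$, its restriction to $V(D_x)$ with empty parts removed is a partition of $V(D_x)$, so $\mathcal{P}$ is indeed a partition of $V(D_x)$. Each part $P_i^x\cap V(D_x)$ is independent in $D_x$, being a subset of the independent set $P_i^x$ of $G_x$ and $D_x$ being induced in $G_x$. Moreover, for $a\in P_i^x\cap V(D_x)$ and $b\in P_j^x\cap V(D_x)$ with $i\neq j$, the completeness of the $k_x$-partite structure of $G_x$ yields $ab\in E(G_x)$, and the identity above gives $ab\in E(D_x)$. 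Thus $D_x$ is complete $l_x$-partite, with the prescribed partition.

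There is no substantial obstacle here; the only mild subtlety is notational, namely keeping clear that the vertices of $G_x$ are edges of $G$ at $x$ (rather than vertices of $G$) and that the passage $G\rightsquigarrow D$ restricts both the vertex set of $G_x$ and, simultaneously, each part of its $k_x$-partition. Once the identification $D_x=G_x[V(D_x)]$ is in place, the conclusion is just the standard structural fact that ``complete multipartite'' is preserved under the induced-subgraph operation.
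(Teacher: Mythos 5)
Your proof is correct and is exactly the justification the paper leaves implicit (the result is stated as an Observation with no proof): the identity $D_x=G_x[V(D_x)]$, which holds precisely because $D$ is induced and the adjacency rule $c(a)c(b)\in E(H)$ is the same in both auxiliary graphs, reduces everything to the standard fact that an induced subgraph of a complete multipartite graph is complete multipartite with parts given by the nonempty intersections. The only caveat worth noting is that one should tacitly assume $x$ is non-isolated in $D$ (as the paper's Notation~3.1 does) so that $D_x$ is defined; otherwise your argument is complete.
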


\begin{notac}\label{not:lxd}
	If $D$ is an induced subgraph of $G$ without isolated vertices, then for every $x$ in $V(D)$ we write $l_x^D$ instead of $l_x$, where $l_x$ is the one referred in Observation~\ref{obs:kxinducida}.
\end{notac}

From now on, $H$ is a, possibly with loops, graph and $G$ is an $H$-colored complete graph with $c:E(G)\longrightarrow V(H)$ a predetermined $H$-coloring of $G$.

\begin{notac}\label{notac:part}
	Whenever $G_x$ is a complete $k_x$-partite graph for some $k_x$ in $\mathbb{N}$, we will denote by $\{P_1^x,P_2^x,\cdots,P_{k_x}^x\}$ the $k_x$-partition of $V(G_x)$ into independent sets.
\end{notac}

Let $A$ be a subset of $V(G)$ and $v$ be a vertex in $V(G)-A$. We say that $A$ has the $H$-dependence property with respect to the vertex $v$ iff for every $\{a,a'\}$ subset of $A$, $a$ is an obstruction of the walk $(v,a,a')$, or $a'$ is an obstruction of the walk $(a,a',v)$. The following lemma follows directly from this definition.

\begin{lem}\label{lema:subchprop}
	Let $A$ be a subset of $V(G)$, $A'$ a subset of $A$ and $v\in V(G)-A$. If $A$ has the $H$-dependence property with respect to the vertex $v$, then $A'$ has the $H$-dependence property with respect to the vertex $v$.
\end{lem}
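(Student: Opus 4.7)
The plan is to observe that the $H$-dependence property is defined by a universally quantified condition over pairs of elements of the set, and hence it is inherited by any subset. Concretely, I would proceed by checking the definition directly for the subset $A'$.

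First, I would fix an arbitrary two-element subset $\{a,a'\}$ of $A'$. Since $A' \subseteq A$, this pair $\{a,a'\}$ is also a two-element subset of $A$. Because $v \in V(G) - A$ and $A' \subseteq A$, we also have $v \in V(G) - A'$, so the setup needed for the $H$-dependence property of $A'$ with respect to $v$ is valid.

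Next, I would invoke the hypothesis that $A$ has the $H$-dependence property with respect to $v$. Applied to the specific pair $\{a,a'\}$, this hypothesis yields exactly the required dichotomy: either $a$ is an obstruction of the walk $(v,a,a')$, or $a'$ is an obstruction of the walk $(a,a',v)$. Since $\{a,a'\}$ was an arbitrary pair from $A'$, this conclusion holds for every pair in $A'$, and so by definition $A'$ has the $H$-dependence property with respect to $v$.

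Because the $H$-dependence property is phrased as a universal statement over pairs in $A$, there is no substantive obstacle here; the argument is essentially an unpacking of the definition together with the transitivity of subset inclusion. In fact, this matches the authors' own remark that the lemma follows directly from the definition, so I would expect the formal write-up to consist of a single short paragraph.
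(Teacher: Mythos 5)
Your proof is correct and is exactly the argument the paper intends: the paper states the lemma "follows directly from this definition" without writing out a proof, and your unpacking — every pair in $A'$ is a pair in $A$, so the required dichotomy is inherited — is precisely that direct verification.
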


\begin{prop}\label{prop:hdep}
	Suppose that for every $x$ in $V(G)$, $G_x$ is a complete $k_x$-partite graph for some $k_x$ in $\mathbb{N}$. Let $A$ be a subset of $V(G)$ and $v$ be a vertex in $V(G)-A$. If $A$ has the $H$-dependence property with respect to the vertex $v$, then there exists some vertex $a$ in $A$ such that
	\begin{enumerate}
		\item $l_a^D\le \frac{|A|+1}{2}$, where $D=G[A]$, and
		\item if $|A|\ge 2$, then $a$ is an obstruction of the walk $(v,a,a')$ for some $a'$ in $N_D(a)$.
	\end{enumerate}
\end{prop}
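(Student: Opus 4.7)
The plan is to encode the $H$-dependence property as a tournament on $A$ and then apply Proposition~\ref{prop:torneoexgrad}. Specifically, I would build a tournament $T$ on the vertex set $A$ as follows: for each pair $\{a,a'\} \subseteq A$, if $a$ is an obstruction of $(v,a,a')$ then I insert the arc $a \to a'$; otherwise, the $H$-dependence property forces $a'$ to be an obstruction of $(v,a',a)$, and I insert $a' \to a$ (if both conditions hold simultaneously I break the tie arbitrarily so that each pair contributes exactly one arc). By Proposition~\ref{prop:torneoexgrad} there is a vertex $a \in A$ with $\delta^+_T(a) \ge \frac{|A|-1}{2}$; let $S = \{a' \in A : a \to a' \text{ in } T\}$, so $|S| \ge \frac{|A|-1}{2}$ and, by construction, $a$ is an obstruction of $(v,a,a')$ for every $a' \in S$.

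Next I would translate this out-degree information into the partition of $V(D_a)$. By Observation~\ref{obs:equiv}, the statement that $a$ is an obstruction of $(v,a,a')$ is equivalent to the edges $va$ and $aa'$ lying in the same class of the $k_a$-partition $\{P_1^a,\dots,P_{k_a}^a\}$ of $V(G_a)$. Let $P$ be the class containing $va$. Then $\{aa' : a' \in S\} \subseteq P$, and since $v \notin A$ the edge $va$ does not belong to $V(D_a)$ while each $aa'$ with $a' \in S$ does; hence $|P \cap V(D_a)| \ge |S| \ge \frac{|A|-1}{2}$. By Observation~\ref{obs:kxinducida}, the $l_a^D$-partition of $V(D_a)$ is obtained from the $k_a$-partition of $V(G_a)$ by intersecting with $V(D_a)$ and discarding empty classes, so one of its classes has size at least $\frac{|A|-1}{2}$; the remaining classes partition at most $(|A|-1)-\frac{|A|-1}{2}=\frac{|A|-1}{2}$ elements into nonempty blocks, so there are at most $\frac{|A|-1}{2}$ of them. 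This yields $l_a^D \le 1 + \frac{|A|-1}{2} = \frac{|A|+1}{2}$, which is (1).

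Finally, for (2), when $|A| \ge 2$ we have $|S| \ge \frac{|A|-1}{2} \ge \frac{1}{2}$, and integrality of $|S|$ forces $|S| \ge 1$; any $a' \in S$ then satisfies $a' \in N_D(a)$ (since $G$, and hence $D$, is complete on $A$) and $a$ is an obstruction of $(v,a,a')$ by the construction of $T$. The main technical point requiring care is the orientation of the tournament, so that $a \to a'$ really records that $a$, not $a'$, is the obstructing endpoint; once that is set up, everything follows from the standard tournament out-degree bound combined with the equivalence of Observation~\ref{obs:equiv}.
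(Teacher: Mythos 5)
Your proposal is correct, and for part (1) it is essentially the paper's own argument: the same tournament orientation of $G[A]$ (arc $x\to y$ recording that $x$ is an obstruction of $(v,x,y)$, ties broken arbitrarily), the same appeal to Proposition~\ref{prop:torneoexgrad}, and the same count showing that the class of $V(D_a)$ containing the out-neighbourhood edges absorbs at least $\frac{|A|-1}{2}$ of the $|A|-1$ edges at $a$, so $l_a^D\le 1+\frac{|A|-1}{2}=\frac{|A|+1}{2}$. Where you genuinely diverge is part (2). The paper proves it by a separate induction on $|A|$: it takes the vertex $a$ produced by part (1), and in the case where $a$ is not an obstruction of $(v,a,a')$ for any $a'\in N_D(a)$, it deletes $a$, invokes the inductive hypothesis on $A\setminus\{a\}$, and then argues that the vertex returned by the induction still has a small $l$-value in the larger graph $D$. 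You instead observe that the vertex of maximum out-degree already has $\delta^+_T(a)\ge\frac{|A|-1}{2}>0$ whenever $|A|\ge 2$, so by integrality it has at least one out-neighbour $a'$, and the very definition of the orientation guarantees that $a$ is an obstruction of $(v,a,a')$; since $G[A]$ is complete, $a'\in N_D(a)$. This is a real simplification: it shows that the bad case driving the paper's induction cannot occur for the specific vertex produced by the tournament argument, so the same vertex witnesses both conclusions and the entire inductive half of the paper's proof is unnecessary. The only caveat, shared with the paper, is the degenerate case $|A|=1$, where $D_a$ (and hence $l_a^D$) is not defined because $a$ is isolated in $D$; this does not affect the way the proposition is used later.
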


\begin{proof}
	\begin{description}
		\item[(1)] Suppose that $|A|=m$ for some $m$ in $\mathbb{N}$, and give an orientation of $E(D)$ as follows: for every $\{x,y\}$ subset of $A$, if $x$ is an obstruction of the walk $(v,x,y)$, then orient the edge from $x$ to $y$. If $x$ is an obstruction of $(v,x,y)$ and $y$ is an obstruction of the walk $(v,y,x)$, then orient the edge $xy$ arbitrarily. This orientation of the graph $D$ is well-defined since $A$ has the $H$-dependence property with respect to the vertex $v$. Let $D'$ be such orientation of the graph $D$. Notice that $D'$ is a tournament of order $m$, hence by Proposition~\ref{prop:torneoexgrad}, $\Delta^{+}(D')\ge \frac{m-1}{2}$, that is, there exists a vertex $u$ in $A$ such that $\delta^{+}(u)=\Delta^{+}(D')\ge \frac{m-1}{2}$. Suppose that $N^{+}_{_{D'}}(u)=\{x_1,x_2,\cdots,x_s\}$, where $s\ge \frac{m-1}{2}$. By the construction of the tournament $D'$, we have that for every $i$ in $\{1,2,\cdots,s\}$, $u$ is an obstruction of the walk $(v,u,x_i)$, which implies by~Observation~\ref{obs:equiv} that $ux_i$ and $uv$ are in the same set of the $k_u$-partition of $V(G_u)$. Hence, by Observation~\ref{obs:kxinducida}, $ux_1, ux_2,\cdots,ux_s$ are in the same set of the $l_u^D$-partition of $V(D_u)$, where $s\ge \frac{m-1}{2}$.\\
		Finally, as $\delta_{_{D}}(u)=m-1$, and $$m-1-s+1\le m-1-\frac{m-1}{2}+1=\frac{2m-m+1}{2}=\frac{m+1}{2},$$ we have that there exists at most $\frac{m+1}{2}$ parts in the $l_u^D$-partition of $V(D_u)$. Therefore $l_u^D\leq \frac{m+1}{2}=\frac{|A|+1}{2}$.
		
		\item[(2)] Let $A$ be a subset of $V(G)$ and $v$ be in $V(G)-A$. We want to prove that, if $A$ has the $H$-dependence property with respect to the vertex $v$, then there exists a vertex $a$ in $A$ such that $l_{a}^D\leq \frac{|A|+1}{2}$, where $D=G[A]$, and moreover, if $|A|\ge 2$, then $a$ is an obstruction of $(v,a,a')$ for some $a'$ in $N_D(a)$. We proceed by induction on $|A|$, with $|A|\geq 2$.\\
		First suppose that $|A|=2$, say $A=\{x,y\}$. Since $A$ has the $H$-dependence property with respect to the vertex $v$, hence $x$ is an obstruction of the walk $(v,x,y)$ or $y$ is an obstruction of the walk $(v,y,x)$. Suppose without loss of generality that $x$ is an obstruction of the walk $(v,x,y)$, thus $x$ is the desired vertex, since $l_{x}^D=1\le \frac{3}{2}=\frac{|A|+1}{2}$, where $D=G[A]$.\\
		
		Now suppose that, if $A'$ is a subset of $V(G)$ and $v$ is a vertex in $V(G)-A'$, such that $A'$ has the $H$-dependency property with respect to the vertex $v$ and $|A'|\le m-1$, then there exists a vertex $a'$ in $A'$ such that $l_{a'}^{D'}\leq \frac{|A'|+1}{2}$, where $D'=G[A']$, and moreover, if $|A'|\ge 2$, then $a'$ is an obstruction of the walk $(v,a',b)$ for some $b$ in $N_{D'}(a')$.\\
		
		Now take a set $A$ having the $H$-dependency property with respect to the vertex $v$ and $|A|=m$, with $m> 2$. By the proof of the part (1), we have that there exists a vertex $a$ in $A$ such that $l_a^D\le \frac{|A|+1}{2}$, where $D=G[A]$. If $a$ is an obstruction of the walk $(v,a,a')$ for some $a'$ in $N_D(a)$, then $a$ is the desired vertex. Suppose now that $a$ is not an obstruction of the walk $(v,a,a')$ for every $a'$ in $N_D(a)$.\\
		Let $A'=A\setminus\{a\}$ be. By Lemma~\ref{lema:subchprop}, $A'$ has the $H$-dependency property with respect to the vertex $v$. Moreover, given that $|A'|=m-1\ge 2$, the inductive hypothesis implies that there exists a vertex $a'$ in $A'$ such that $l_{a'}^{D'}\leq \frac{|A'|+1}{2}=\frac{m}{2}$, where $D'=G[A']$ and such that $a'$ is an obstruction of the walk $(v,a',b)$ for some $b$ in $N_{D'}(a')$. Since $a'$ is an obstruction of the walk $(v,a',b)$, we have by Observation~\ref{obs:equiv} that $va'$ and $a'b$ are in the same part of the $k_{a'}$-partition of $V(G_{a'})$. Now, as $A$ has the $H$-dependency property with respect to the vertex $v$ and $\{a,a'\}$ is a subset of $A$, it follows that $a$ is an obstruction of $(v,a,a')$ or $a'$ is an obstruction of $(a,a',v)$. However, by our assumption about $a$, we have that $a'$ is an obstruction of the walk $(a,a',v)$, which implies by Observation~\ref{obs:equiv} that $aa'$ and $va'$ are in the same part of the $k_{a'}$-partition of $V(G_{a'})$. Thus, $aa'$ and $a'b$ are in the same part of the $k_{a'}$-partition of $V(G_{a'})$, in particular, by Observation~\ref{obs:kxinducida} we have that $aa'$ and $a'b$ are in the same part of the $l_{a'}^{D}$-partition of $V(D_{a'})$. In addition, we have that $l_{a'}^{D'}\leq \frac{|A'|+1}{2}=\frac{m}{2}$, so $l_{a'}^{D}=l_{a'}^{D'}\leq \frac{m}{2}\le\frac{m+1}{2}= \frac{|A|+1}{2}$, and therefore $a'$ is the desired vertex.
		
	\end{description}
\end{proof}

\section{Main Results}

\textbf{Proof of Theorem~\ref{teo:3Hciclo}.}

Proceeding by contradiction, suppose that there exists a vertex $v$ in $V(G)$ such that $v$ is not contained in an $H$-cycle of length 3 in $G$. As $G_v$ is a complete $k_v$-partite graph for some $k_v$ in $\mathbb{N}$, by Notation~\ref{notac:part}, $\{P_1^v,P_2^v,\cdots,P_{k_v}^v\}$ is the $k_v$-partition of $V(G_v)$ into independent sets. Now, for every $i$ in $\{1,2,\cdots,k_v\}$, we define $N_i^v=\{u\in N_G(v):uv\in P_i^v\}$.\\

\begin{obs}\label{obs:obstciclo}
	For every $\{i,j\}$ subset of $\{1,2,\cdots,k_v\}$, for each $x$ in $N_i^v$ and for any $y$ in $N_j^v$, we have that:
	\begin{description}
		\item[(1)] $v$ is not an obstruction of the cycle $(v,x,y,v)$, and
		\item[(2)] $x$ is an obstruction of the walk $(v,x,y)$ or $y$ is an obstruction of the walk $(v,y,x)$.
	\end{description} 
\end{obs}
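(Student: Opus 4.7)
The plan is to derive both parts of Observation~\ref{obs:obstciclo} as essentially direct consequences of the structural description of $G_v$ together with the standing contradiction hypothesis that $v$ is not contained in any $H$-cycle of length~$3$. Throughout I read $\{i,j\}$ as a two-element subset, so $i\neq j$; this is crucial, since otherwise part (1) would fail when $vx$ and $vy$ lie in the same class $P_i^v$.

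For part (1), I would start by unpacking the definitions of $N_i^v$ and $N_j^v$: by construction the edges $vx$ and $vy$ lie respectively in the classes $P_i^v$ and $P_j^v$ of the $k_v$-partition of $V(G_v)$. Since these are distinct parts, Observation~\ref{obs:equiv} (via the equivalence of items (1) and (3)) gives $c(vx)c(vy)\in E(H)$. Recalling the paper's convention that $v$ is an obstruction of the cycle $(v,x,y,v)$ iff $c(yv)c(vx)\notin E(H)$, we conclude that $v$ is not an obstruction, which is exactly part (1).

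For part (2), I would invoke Observation~\ref{obs:equiv4}: the cycle $(v,x,y,v)$ is an $H$-cycle if and only if none of $v$, $x$, $y$ is an obstruction. Part (1) rules out $v$, so the contradiction hypothesis that $v$ lies in no $H$-cycle of length~$3$ forces at least one of $x$ or $y$ to be an obstruction of the cycle. Unpacking the definition of obstruction at an internal vertex, $x$ being an obstruction of the cycle at position $x$ amounts to $c(vx)c(xy)\notin E(H)$, i.e.\ $x$ is an obstruction of the walk $(v,x,y)$, and symmetrically $y$ being an obstruction of the cycle at position $y$ amounts to $c(vy)c(yx)\notin E(H)$, i.e.\ $y$ is an obstruction of the walk $(v,y,x)$. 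This yields the disjunction stated in (2).

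I do not expect any serious obstacle here: the work is purely a translation between the ``cycle in $G$'' language, the ``color adjacency in $H$'' language, and the ``partition of $V(G_v)$'' language, and Observations~\ref{obs:equiv} and~\ref{obs:equiv4} provide those translations directly. The only point deserving a cautious sentence in the write-up is the remark above that $i\neq j$ is part of the hypothesis, so that $vx$ and $vy$ really do sit in different parts of the $k_v$-partition of $V(G_v)$.
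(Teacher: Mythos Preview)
Your proposal is correct and follows essentially the same approach as the paper: part (1) uses Observation~\ref{obs:equiv} on the fact that $vx$ and $vy$ lie in different parts of the $k_v$-partition, and part (2) proceeds by contradiction via Observation~\ref{obs:equiv4} and part (1) together with the standing assumption that $v$ lies in no $H$-cycle of length~3. Your remark that $\{i,j\}$ is a two-element subset (so $i\neq j$) is exactly the intended reading and worth making explicit.
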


\begin{proof}
	\begin{description}
		\item[(1)] By construction of the sets $N_i^v$ and $N_j^v$, it follows that $vx$ and $vy$ are in different parts of the $k_v$-partition of $V(G_v)$, which implies by Observation~\ref{obs:equiv} that $v$ is not an obstruction of the path $(x,v,y)$, that is, $v$ is not an obstruction of the cycle $(v,x,y,v)$.
		\item[(2)] Proceeding by contradiction, it follows directly from Observations~\ref{obs:equiv4} and \ref{obs:obstciclo}(1) that $(v,x,y,v)$ is an $H$-cycle of length~3 in $G$ containing $v$, which is impossible by our assumption. 
	\end{description}
\end{proof}

Renaming (if necessary) we can assume that $|N_1^v|=|N_2^v|=\cdots =|N_r^v|=1$ and $2\le |N_{r+1}^v|\le \cdots \le |N_{k_v}^v|$ for some nonnegative integer $r$.\\

\textbf{Claim 1.} $r\ge 2$. 

\begin{proof}
	Otherwise, if $r\leq 1$, then $$n-1=|N_1^v|+\cdots+|N_r^v|+|N_{r+1}^v|+\cdots+|N_{k_v}^v|\ge r+2(k_{v}-r)=2k_{v}-r\ge 2k_{v}-1$$which implies that $k_v\le \frac{n}{2}$. This is impossible since by hypothesis $k_v\ge \frac{n+1}{2}$.
\end{proof}

Consider $D=G[N_1^v\cup N_2^v\cup \cdots \cup N_r^v]$, $D'=G[N_{r+1}^v\cup\cdots\cup N_{k_v}^v]$ and $s=k_v-r$.\\

Given that $|N_1^v|=|N_2^v|=\cdots =|N_r^v|=1$, it follows directly from Observation~\ref{obs:obstciclo}(2) the following claim.\\

\textbf{Claim 2.} $V(D)$ has the $H$-dependency property with respect to the vertex~$v$.\\

Now the $H$-dependency property of $V(D)$ (with respect to the vertex~$v$) and Proposition~\ref{prop:hdep}, imply that there exists a vertex $a$ in $V(D)$ such that $l_a^{D}\le \frac{|V(D)|+1}{2}=\frac{r+1}{2}$, and moreover, since $r\ge 2$, it follows that $a$ is an obstruction of the walk $(v,a,a')$ for some $a'$ in $N_{D}(a)$. By construction, we know that $N_i^v=\{a\}$ for some $i$ in $\{1,2,\cdots,r\}$.\\

\textbf{Claim 3.} $k_a\leq l_a^{D}+s$ (see~Notation~\ref{not:lxd}).
\begin{proof}
	Recall that $G_a$ is a complete $k_a$-partite graph for some $k_a$ in $\mathbb{N}$, and $\mathcal{P}=\{P_1^{a},P_2^{a},\cdots, P_{k_a}^{a}\}$ is the $k_a$-partition of $V(G_a)$ into independent sets. By the Observation~\ref{obs:kxinducida} and given that $D$ is an induced subgraph of $G$, it follows that $\mathcal{Q}=\{P_i^{a}\cap V(D_a): P_i^{a}\cap V(D_a)\neq \emptyset\, i\in \{1,2,\cdots, k_a\}\}$ is the $l_{a}^D$-partition of $V(D_a)$ into independent sets, where $V(D_a)=\{e\in E(D):e\ \mbox{is incident with}\ a\}$. Renaming, if necessary, we say that $\mathcal{Q}=\{P_1^{a}\cap V(D_a),P_2^{a}\cap V(D_a),\cdots, P_{l_a^D}^{a}\cap V(D_a)\}$ is the $l_a^D$-partition of $V(D_a)$ into nonempty independent sets.\\
	Now, in order to prove this upper bound of $k_a=k_a^{G}$ in terms of $l_a^{D}$, consider the edges incident with $a$ that are not vertices in $D_a$, that is, $V(G_a)-V(D_a)=\{va\}\cup \{az:z\in N_i^v, i\in \{r+1,\cdots,k_v\}\}$. Since $a$ is an obstruction of the walk $(v,a,a')$ for some $a'$ in $N_{D}(a)$, thus $va$ and $aa'$ are in the same part of the $k_a$-partition of $V(G_a)$ by the Observation~\ref{obs:equiv}. Moreover, as $aa'$ is in $V(D_a)$, it follows that there exists $\alpha$ in $\{1,2,\cdots,l_a^D\}$ such that $aa'\in P_{\alpha}^{a}$, so $\{aa',va\}\subseteq P_{\alpha}^{a}$, where $P_{\alpha}^{a}$ is already considered in the first $l_a^D$ parts of $\mathcal{P}$. Hence, when we consider the edge $va$, we add no extra part in the $k_a$-partition of $V(G_a)$ with respect to the number of parts of the $l_a^D$-partition of $V(D_a)$.\\
	Next, we will prove that for every $j$ in $\{r+1,\cdots,k_v\}$, we add at most one part in $\mathcal{P}$ with respect to the number of parts in $\mathcal{Q}$. \\
	
	Let $j$ be an element of $\{r+1,\cdots,k_v\}$ and $x$ be a vertex in $N_j^v$.
	
	\begin{description}
		\item[\textbf{Case 1.}] $a$ is an obstruction of the walk $(v,a,x)$.\\
		If $a$ is an obstruction of the walk $(v,a,x)$, then by Observation~\ref{obs:equiv}, $va$ and $ax$ are in the same set of the $k_a$-partition of $V(G_a)$, and given that $va$ is in $P_{\alpha}^{a}$, thus $ax\in P_{\alpha}^{a}$, where $P_{\alpha}^{a}$ is already counted in the first $l_a^D$ parts of $\mathcal{P}$. Therefore, in this case we add no extra part in $\mathcal{P}$ with respect to the number of parts in $\mathcal{Q}$.
		
		\item[\textbf{Case 2.}] $a$ is not an obstruction of the walk $(v,a,x)$.\\
		If $a$ is not an obstruction of the walk $(v,a,x)$, then $x$ is an obstruction of the walk $(v,x,a)$, by Observation~\ref{obs:obstciclo}. Moreover, if $a$ is not an obstruction of the walk $(v,a,x)$, then by Observation~\ref{obs:equiv}, $va$ and $ax$ are in different sets of the $k_a$-partition of $V(G_a)$, with $va\in P_{\alpha}^{a}$. We can assume that $ax$ is in $P_{\beta}^{a}$ for some $\beta$ in $\{l_a^D +1,\cdots,k_a\}$, otherwise, when we consider the edge $ax$, it is already counted in the first $l_a^D$ parts of $\mathcal{P}$.\\
		
		\textbf{Claim 3.1.} For every $y$ in $N_j^v$, $ay\in P_{\alpha}^{a}$ or $ay\in P_{\beta}^{a}$.			
		\begin{proof}
			Let $y$ be in $N_j^v$, with $y\neq x$.
			
			\begin{description}
				\item[Subcase 2.1] $a$ is an obstruction of the walk $(v,a,y)$.\\
				Proceeding in a similar way as in Case~1, we can prove that $va$ and $ay$ are in the same set of the $k_a$-partition of $V(G_a)$, and given that $va$ is in $P_{\alpha}^{a}$, we have that $ay\in P_{\alpha}^{a}$, as desired.
				\item[Subase 2.2] $a$ is not an obstruction of the walk $(v,a,y)$.\\
				Since $a$ is not an obstruction of the walk $(v,a,y)$, it follows that $y$ is an obstruction of the walk $(v,y,a)$ by Observation~\ref{obs:obstciclo}, and recall that $x$ is an obstruction of the walk $(v,x,a)$. On the other hand, given that $\{vx,vy\}$ is a subset of $N_j^v$, we have by construction of the set $N_j^v$ that $vx$ and $vy$ are in the same part of the $k_v$-partition of $V(G_v)$, which implies that $v$ is an obstruction of the walk $(x,v,y)$, by Observation~\ref{obs:equiv}. Hence, $y$, $x$ and $v$ are obstructions of the cycle $C=(v,x,a,y,v)$, with $\ell(C)=4$, so it follows by the hypothesis~2 of Theorem~\ref{teo:3Hciclo} that $a$ is also an obstruction of the cycle $C$. Since $a$ is an obstruction of the cycle $C$, we have by Observation~\ref{obs:equiv} that $ay$ and $ax$ are in the same part of the $k_a$-partition of $V(G_a)$, and given that $ax\in P_{\beta}^{a}$, thus $ay\in P_{\beta}^{a}$, as desired.
			\end{description}
			
			So Claim~3.1 is proved.
		\end{proof}
		Since for every $y$ in $N_j^v$, $ay\in P_{\alpha}^{a}$ with $\alpha$ in $\{1,2,\cdots,l_a^D\}$, or $ay\in P_{\beta}^{a}$ with $\beta$ in $\{l_a^D +1,\cdots, k_a\}$, we have that for every $j$ in $\{r+1,\cdots ,k_v\}$ we add at most one part in $\mathcal{P}$ with respect to the number of parts in $\mathcal{Q}$ (namely $P_{\beta}^{a}$), where $|\mathcal{P}|=|\{P_1^{a},P_2^{a},\cdots,P_{k_a}^{a}\}|=k_a$, $|\mathcal{Q}|=|\{P_1^{a}\cap V(D_a),P_2^{a}\cap V(D_a),\cdots, P_{l_a^D}^{a}\cap V(D_a)\}|=l_a^D$ and $s=k_v -r$.
	\end{description}
	Therefore $k_a\le l_a^D +s$.		
\end{proof}

Now, $|V(D')|=|N_{r+1}^v|+\cdots+|N_{k_v}^v|\ge 2(k_v -r)=2s$, so $s\le \frac{|V(D')|}{2}$. Hence, $$k_a\le l_a^D+s\le \frac{r+1}{2}+\frac{|V(D')|}{2}=\frac{(|V(D)|+1)+|V(D')|}{2}=\frac{n}{2},$$which is impossible since the hypothesis~1 of Theorem~\ref{teo:3Hciclo} establishes that $k_a\ge \frac{n+1}{2}$.\\

Therefore $v$ is contained in an $H$-cycle of length 3 in $G$. $\square$\\

The two following observations show examples of $H$-colored complete graphs, showing that the hypotheses of Theorem~\ref{teo:3Hciclo} are tight.

\begin{obs}
	The hypothesis 1 of Theorem~\ref{teo:3Hciclo} cannot be dropped. Notice that in the $H$-colored complete graph $G$ of the Figure~\ref{fig:ejemplo}, we have that for every $x$ in $V(G)$, $G_x$ is a complete 2-partite graph, where $2<\frac{|V(G)|+1}{2}$, thus the hypothesis 1 of Theorem~\ref{teo:3Hciclo} is not fulfilled. Also, the cycle $(a,b,c,d,a)$ has no obstructions, the cycle $(a,b,d,c,a)$ has 2 obstructions (namely $a$ and $c$), and the cycle $(a,d,b,c,a)$ has 2 obstructions (namely $b$ and $d$), therefore the hypothesis~2 of Theorem~\ref{teo:3Hciclo} is satisfied. Nevertheless, the vertex $a$ in $V(G)$ is not in an $H$-cycle of length 3 in $G$, since $a$ is an obstruction of the cycle $(a,b,c,a)$, $c$ is an obstruction of the cycle $(a,c,d,a)$, and $d$ is an obstruction of the cycle $(a,b,d,a)$.
\end{obs}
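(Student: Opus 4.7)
The plan is to exhibit an explicit example that satisfies all the properties claimed in the statement; since the observation itself specifies which vertex obstructs each cycle, the job reduces to producing a coloring (and an auxiliary graph $H$) consistent with those obstructions. I would take $G=K_{4}$ on vertex set $\{a,b,c,d\}$, name the six colors $\alpha=c(ab)$, $\beta=c(bc)$, $\gamma=c(cd)$, $\delta=c(ad)$, $\varepsilon=c(ac)$, $\zeta=c(bd)$, and set $V(H)=\{\alpha,\beta,\gamma,\delta,\varepsilon,\zeta\}$. The edge set of $H$ is then forced by the stated obstruction pattern.

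Next I would translate each asserted fact into a constraint on $E(H)$ via Observations~\ref{obs:equiv} and \ref{obs:equiv4}. The three triangles through $a$ contribute the non-edges $\varepsilon\alpha,\ \varepsilon\gamma,\ \delta\zeta\notin E(H)$. The $4$-cycle $(a,b,c,d,a)$ having no obstruction gives the edges $\delta\alpha,\alpha\beta,\beta\gamma,\gamma\delta\in E(H)$; the cycle $(a,b,d,c,a)$ having obstructions only at $a$ and $c$ adds $\alpha\zeta,\zeta\gamma\in E(H)$; and $(a,d,b,c,a)$ having obstructions only at $b$ and $d$ adds $\varepsilon\delta,\beta\varepsilon\in E(H)$ together with the further non-edge $\beta\zeta\notin E(H)$. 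The crucial step is then the consistency check: the eight required edges and four required non-edges involve twelve distinct pairs of colors, so $H$ may be defined to contain exactly the former and none of the latter, with the remaining three pairs chosen freely.

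With $H$ and $c$ in place I would verify the three conclusions. For each $x\in\{a,b,c,d\}$ the graph $G_{x}$ has three vertices and, by the constraints above, exactly two of its three pairs of incident edges receive colors adjacent in $H$; hence $G_{x}$ is a path on three vertices, which is complete $2$-partite, and $k_{x}=2<\frac{|V(G)|+1}{2}=\frac{5}{2}$, so hypothesis~$1$ of Theorem~\ref{teo:3Hciclo} fails. The three $4$-cycles of $G$ then have $0$, $2$, and $2$ obstructions respectively, so none has exactly $3$ obstructions, and hypothesis~$2$ is satisfied. Finally the three triangles through $a$ are $(a,b,c,a)$, $(a,c,d,a)$, $(a,b,d,a)$, and the non-edges $\varepsilon\alpha,\varepsilon\gamma,\delta\zeta\notin E(H)$ show that $a$, $c$, $d$ are obstructions of these cycles respectively; hence by Observation~\ref{obs:equiv4} none of them is an $H$-cycle, so $a$ lies in no $H$-cycle of length $3$ in $G$.

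The only non-mechanical step in this argument is the consistency check of the second paragraph; once it is cleared, each of the three verifications reduces to reading off a single adjacency or non-adjacency in the already-defined graph $H$.
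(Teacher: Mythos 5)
Your construction is correct and is essentially the paper's own argument: the paper simply exhibits a concrete $4$-vertex example (Figure~\ref{fig:ejemplo}) and reads off the same obstruction data, while you rebuild such an example by rainbow-colouring $K_4$ and letting the stated obstructions force $E(H)$. Your consistency check is sound --- the eight forced edges and four forced non-edges of $H$ are indeed twelve distinct colour pairs, and the three leftover pairs are colours of disjoint edges of $K_4$, so they never occur as consecutive edges and each $G_x$ comes out as a $P_3$, i.e.\ complete $2$-partite, exactly as required.
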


\begin{figure}[h!]
	\begin{center}
		\includegraphics[scale=0.4]{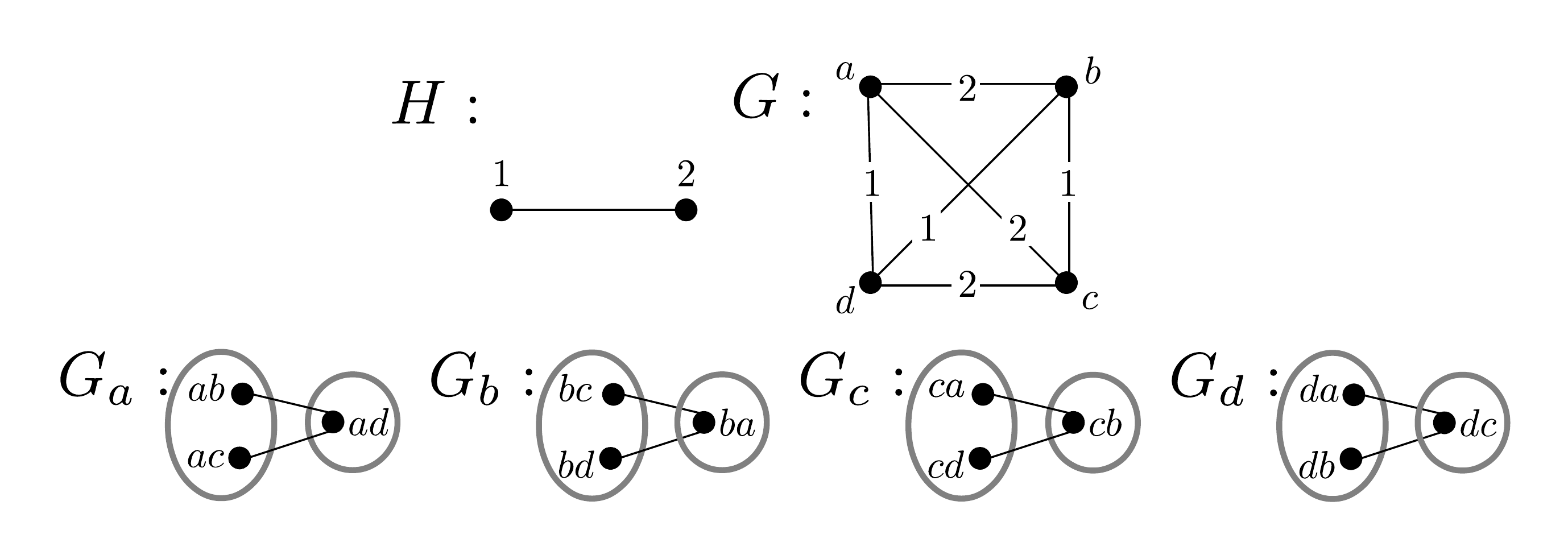}
		\caption{The hypothesis~1 of Theorem~\ref{teo:3Hciclo} cannot be dropped.}
		\label{fig:ejemplo}
	\end{center}
\end{figure}

\begin{obs}
	The hypothesis 2 of Theorem~\ref{teo:3Hciclo} cannot be dropped as the $H$-colored complete graph in the Figure~\ref{fig:K7} shows. Notice that for every $a$ in $V(G)$, $G_a$ is a complete 4-partite graph, that is, the hypothesis~1 of Theorem~\ref{teo:3Hciclo} is fulfilled; and the vertex $r$ in $V(G)$ is not contained in an $H$-cycle of length 3, since each one of the 15 cycles of length 3 containing $r$ has the indicated obstruction: $(r,x,w,r)$, obstruction in vertex $w$; $(r,x,v,r)$, obstruction in vertex $x$; $(r,x,u,r)$, obstruction in vertex $x$; $(r,x,t,r)$, obstruction in vertex $t$; $(r,x,s,r)$, obstruction in vertex $s$; $(r,w,v,r)$, obstruction in vertex $w$; $(r,w,u,r)$, obstruction in vertex $u$; $(r,w,t,r)$, obstruction in vertex $t$; $(r,w,s,r)$, obstruction in vertex $s$; $(r,w,u,r)$, obstruction in vertex $u$; $(r,v,u,r)$, obstruction in vertex $u$; $(r,v,t,r)$, obstruction in vertex $v$; $(r,v,s,r)$, obstruction in vertex $v$; $(r,u,t,r)$ obstruction in vertex $r$; $(r,u,s,r)$ obstruction in vertex $r$; $(r,t,s,r)$, obstruction in vertex $r$.	However, the hypothesis~2 of Theorem~\ref{teo:3Hciclo} is not satisfied because $(r,s,x,t,r)$ is a cycle of length 4 in $G$, with exactly 3 obstructions (namely $r,s$ and $t$).
	
	\begin{figure}[h!]
		\begin{center}
			\includegraphics[scale=0.3]{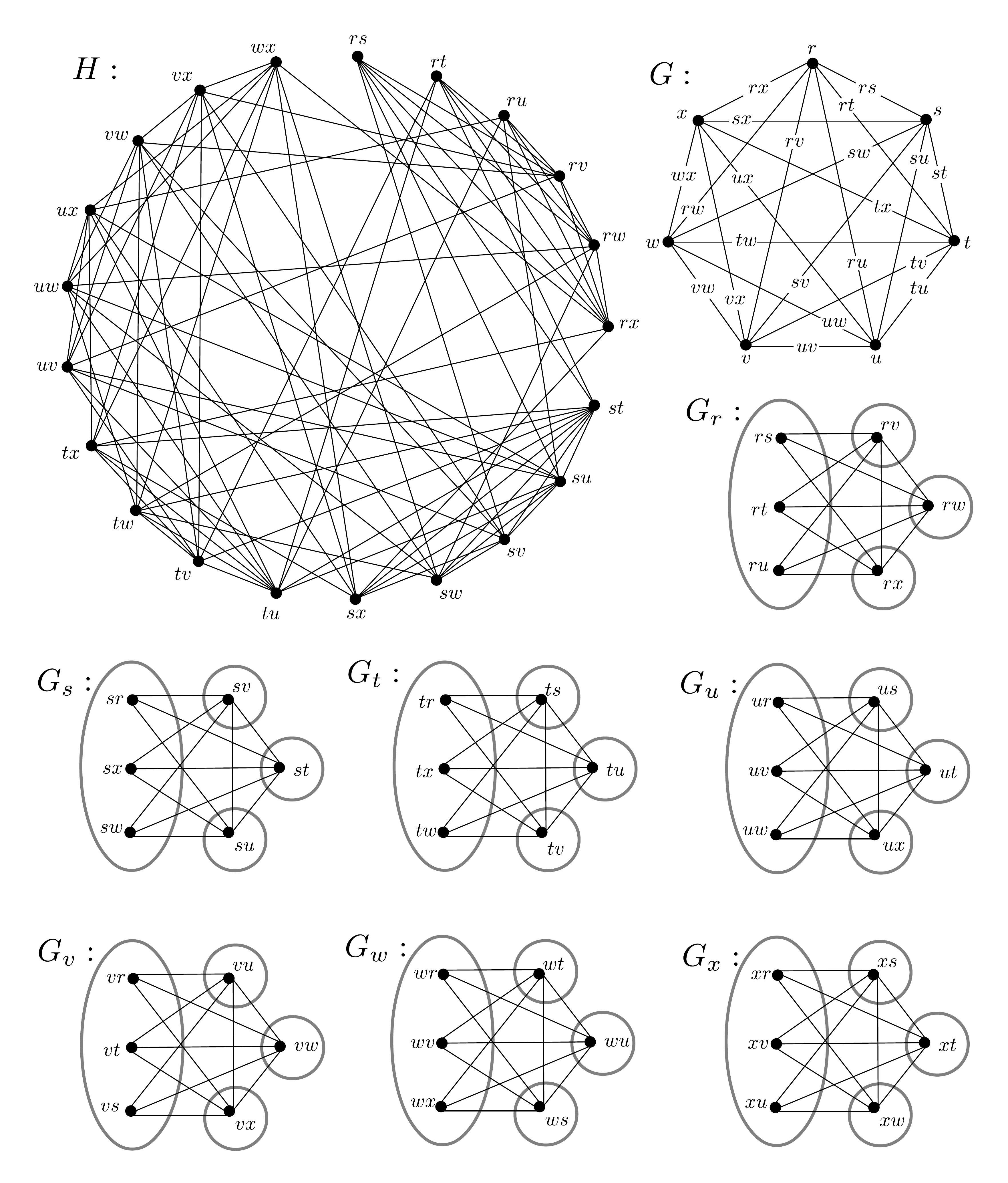}
			\caption{The hypothesis~2 of Theorem~\ref{teo:3Hciclo} cannot be dropped.}
			\label{fig:K7}
		\end{center}
	\end{figure}
\end{obs}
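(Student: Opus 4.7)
The plan is to establish the tightness of hypothesis~2 by exhibiting a concrete $H$-colored complete graph that satisfies hypothesis~1, violates hypothesis~2, and contains a vertex which lies in no $H$-cycle of length~3. Since $k_x\ge (n+1)/2$ must hold, the most economical candidate is $n=7$ with $k_x\ge 4$ for every vertex $x$; this forces each $G_x$ (which has $n-1=6$ vertices) to realize the complete 4-partite structure almost rigidly, with natural part-size profiles such as $(2,2,1,1)$ summing to~$6$.

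The design strategy would be to single out a vertex $r$ intended to fail the conclusion, and control the partition of $V(G_r)$ first. Its four parts determine which of the $\binom{6}{2}=15$ triangles through $r$ are automatically obstructed at $r$ (namely those $(r,u,w,r)$ with $ru$ and $rw$ in the same part), and which triangles must be sabotaged elsewhere (different-part pairs). For each unobstructed pair $\{u,w\}$ one is forced, via Observation~\ref{obs:equiv}, to place either $\{ru,uw\}$ in a common part of the partition of $V(G_u)$ or $\{rw,uw\}$ in a common part of the partition of $V(G_w)$. I would propagate these same-part requirements greedily along the non-$r$ vertices, backtracking whenever a local partition would be forced to exceed four parts; once a consistent coloring is found, verification reduces to three routine checks: (i)~each $G_x$ is complete 4-partite; (ii)~every 3-cycle through $r$ contains an obstruction at $r$, at the second, or at the third vertex; and (iii)~some 4-cycle, for instance $(r,s,x,t,r)$, carries obstructions at exactly three of its four vertices.

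The main obstacle is the local/global consistency of the coloring: each edge $uv$ belongs simultaneously to $V(G_u)$ and $V(G_v)$, so its color constrains both partitions at once. Fifteen separate triangle-blocking requirements must therefore be reconciled with seven simultaneous complete 4-partite structures, and there is little slack in the edge counts. The flexibility that makes the construction possible is precisely the violation of hypothesis~2: the 4-cycle $(r,s,x,t,r)$ carrying three obstructions is the structural device that allows two different triangles through $r$ to be blocked at their ``non-$r$'' vertices without collapsing the 4-partite structures of $G_s$, $G_t$, or $G_x$. After positing a candidate coloring such as the one indicated in Figure~\ref{fig:K7}, the remaining work is a finite case analysis: enumerate the fifteen triangles through $r$ and exhibit an obstruction in each, then confirm the 4-partite structure at the seven vertices, which together verify that hypothesis~1 holds, hypothesis~2 fails, and the conclusion of Theorem~\ref{teo:3Hciclo} breaks down.
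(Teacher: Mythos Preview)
Your proposal is correct and follows essentially the same approach as the paper: both exhibit a specific $H$-colored $K_7$ (the one in Figure~\ref{fig:K7}), verify that every $G_x$ is complete $4$-partite so hypothesis~1 holds, enumerate the fifteen triangles through $r$ and name an obstruction in each, and then point to the $4$-cycle $(r,s,x,t,r)$ with exactly three obstructions to witness the failure of hypothesis~2. The only difference is cosmetic: the paper presents the finished coloring and the bare verification list, whereas you additionally explain the design heuristics (choosing $n=7$, forcing part sizes summing to~$6$, and propagating same-part constraints) that lead to such a coloring.
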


\textbf{Proof of Theorem~\ref{teo:4Hciclo-1}}

If $n=4$, then for every $x$ in $V(G)$ we have that $k_x\ge \frac{5}{2}$, and $k_x\le \delta(x)=3$, so $k_x=3$. And hence, for every two edges $a$ and $b$ incident with $x$, we have that $a$ and $b$ are in different parts of the $k_x$-partition of $V(G_x)$.\\

\hspace*{0.5cm}If $n=5$, then for every $x$ in $V(G)$ we have that $k_x\ge 3$, and $k_x\le \delta (x)=4$, so $k_x=3$ or $k_x=4$. Then, for every vertex $x$ in $V(G)$, at most 2 edges incident with $x$ are in the same part of the $k_x$-partition of $V(G_x)$. As a consequence, for each vertex $x$ in $V(G)$, we know that there are at most two cycles of length 4 in $G$ passing through $x$ that are not $H$-cycles. And we are done.\\

\hspace*{0.5cm}The case $n=6$, can be easily carried as a consequence of the case $n=5$, because for each $x$ in $V(G)$, $G-x$ satisfies the hypotheses of Theorem~\ref{teo:4Hciclo-1}.\\

\hspace*{0.5cm}If $n=7$, then for every $x$ in $V(G)$, we have that $k_x\ge 4$. Proceeding by contradiction, suppose there exists a vertex $v$ in $V(G)$ such that $v$ is not contained in an $H$-cycle of length 4 in $G$. It follows by Theorem~\ref{teo:3Hciclo} that there exists an $H$-cycle of length 3 containing $v$, say $(v,x,y,v)$. Thus, by Observation~\ref{obs:equiv}, we have that $yv$ and $xy$ are in different parts of the $k_v$-partition of $V(G_v)$, $vx$ and $yx$ are in different parts of the $k_x$-partition of $V(G_x)$, and $vy$ and $xy$ are in different parts of the $k_y$-partition of $V(G_y)$. Without loss of generality, suppose that $xv\in P_1^v$ and $yv\in P_2^v$, $vx\in P_1^x$ and $yx\in P_2^x$, and $vy\in P_1^y$ and $xy\in P_2^y$. As for every $a$ in $V(G)$, $k_a\ge 4$, we have that for every $t$ in $\{v,x,y\}$, there exists at least two different parts of the $k_t$-partition of $V(G_t)$, other than the two already mentioned. Since there are only 4 vertices in $V(G)-\{v,x,y\}$, it follows that there is a vertex $w$ such that, without loss of generality, $wx\in P_3^x$ and $wy\in P_3^y$, and suppose that $xw\in P_1^w$.\\
\textbf{Claim 1.} $\{vw,xw,yw\}$ is a subset of $P_1^{w}$, and for every $t$ in $V(G)-\{v,w,x,y\}$, $tw\notin P_1^{w}$.
\begin{proof}
	First we prove that $yw$ is in $P_1^w$. Proceeding by contradiction, suppose that $yw\notin P_1^w$, then by Observation~\ref{obs:equiv4}, $(v,y,w,x,v)$ is an $H$-cycle of length 4 in $G$ containing $v$ (since $vy\in P_2^v\cap P_1^y$, $yw\in P_3^y-P_1^w$, $wx\in P_3^y\cap P_1^w$ and $xv\in P_1^v\cap P_1^x$), which is impossible by our assumption. Therefore $yw\in P_1^w$.\\
	Now, we prove that $vw\in P_1^w$. Proceeding by contradiction, suppose that $vw\notin P_1^w$.\\
	\hspace*{0.5cm}When $wv\in P_1^v$, we have that by Observation~\ref{obs:equiv4},  $(y,v,w,x,y)$ is an $H$-cycle of length 4 in $G$ containing $v$ (as $yv\in P_2^v\cap P_1^y$, $vw\in P_1^v-P_1^w$, $wx\in P_3^x\cap P_1^w$ and $xy\in P_2^y\cap P_2^x$), a contradiction. \\
	\hspace*{0.5cm}When $wv\notin P_1^v$, the previous observation implies that $(y,w,v,x,y)$ is an $H$-cycle of length 4 in $G$ containing $v$ (notice that $yw\in P_3^y\cap P_1^w$, $wv\notin P_1^w\cup P_1^v$, $vx\in P_1^v\cap P_1^x$ and $xy\in P_2^y\cap P_2^x$), which is impossible.\\ 
	We conclude that $vw\in P_1^w$. \\
	Therefore $\{vw,xw,yw\}\subseteq P_1^w$, and  moreover, as $G_w$ is a complete $k_w$-partite graph, with $k_w\ge 4$, it follows that for each $t$ in $V(G)-\{v,w,x,y\}$, $tw\notin P_1^w$.
\end{proof}

To conclude the proof when $n=7$, we consider the two following cases. Both cases lead us to a contradiction.

\begin{description}
	\item[\textbf{Case 1.}] $wv\notin P_1^v \cup P_2^v$.\\
	Without loss of generality, suppose that $wv\in P_3^v$. Given that $G_v$ is a complete $k_v$-partite graph, with $k_v\ge 4$, consider $u$ in $V(G)-\{v,w,x,y\}$ such that $vu\in P_4^v$; and since $u\in V(G)-\{v,x,y,w\}$, it follows by Claim~1 that $uw\notin P_1^w$. Suppose that $wu\in P_1^{u}$.\\
	\textbf{Claim~2.} $\{vu,wu,xu,yu\}\subseteq P_1^{u}$.
	\begin{proof}
		As $wu\in P_1^{u}$, there remains to prove that $\{vu,xu,yu\}\subseteq P_1^{u}$.\\
		First we prove that $vu\in P_1^{u}$. Proceeding by contradiction, suppose that $vu\notin P_1^{u}$, hence by Observation~\ref{obs:equiv4}, $(x,v,u,w,x)$ is an $H$-cycle of length 4 in $G$ containing $v$ (since $xv\in P_1^v\cap P_1^x$, $vu\in P_4^v-P_1^{u}$, $uw\in P_1^{u}-P_1^w$ and $wx\in P_3^x\cap P_1^w$), which is impossible by our assumption. Therefore $vu\in P_1^{u}$.\\
		
		Next, we prove that $xu\in P_1^{u}$. Proceeding by contradiction, suppose that $xu\notin P_1^{u}$. \\
		\hspace*{0.5cm}When $ux\in P_2^x$, Observation~\ref{obs:equiv4}, $wv\in P_1^w\cap P_3^v$, $vx\in P_1^v\cap P_1^x$, $xu\in P_2^x-P_1^{u}$ and $uw\in P_1^{u}-P_1^w$ imply that $(w,v,x,u,w)$ is an $H$-cycle of length 4 in $G$ containing $v$, which is impossible. \\
		\hspace*{0.5cm}When $ux\notin P_2^x$, the previous argument implies that $(v,y,x,u,v)$ is an $H$-cycle of length 4 in $G$ containing $v$ (since $vy\in P_2^v\cap P_1^y$, $yx\in P_2^y\cap P_2^x$, $xu\notin P_2^x\cup P_1^{u}$ and $uv\in P_1^{u}\cap P_4^v$), a contradiction.\\ 
		We conclude that $xu\in P_1^{u}$.\\
		
		Now, we prove that $yu\in P_1^{u}$. Proceeding by contradiction, suppose that $yu\notin P_1^{u}$. \\
		\hspace*{0.5cm}When $uy\in P_2^y$, Observation~\ref{obs:equiv4}, $wv\in P_1^w\cap P_3^v$, $vy\in P_2^v\cap P_1^y$, $yu\in P_2^y-P_1^{u}$ and $uw\in P_1^{u}-P_1^w$ imply that $(w,v,y,u,w)$ is an $H$-cycle of length 4 in $G$ containing $v$, which is impossible. \\
		\hspace*{0.5cm}When $uy\notin P_2^y$, it follows from the previous observation that $(v,x,y,u,v)$ is an $H$-cycle of length 4 in $G$ containing $v$ (as $vx\in P_1^v\cap P_1^x$, $xy\in P_2^y\cap P_2^x$, $yu\notin P_2^y\cup P_1^{u}$ and $uv\in P_1^{u}\cap P_4^v$), a contradiction.\\ 
		We conclude that $yu\in P_1^{u}$.\\
		
		This concludes the proof of Claim~2. Hence, $\{vu,wu,xu,yu\}\subseteq P_1^{u}$.
	\end{proof}
	Finally, as $\{vu,wu,xu,yu\}$ is a subset of $P_1^{u}$ and $\delta_G(u)=6$, we have that $k_u\le 3$, which is impossible since by hypothesis $k_u\ge 4$.
	\item[\textbf{Case 2.}] $wv\in P_1^v\cup P_2^v$.\\
	By symmetry, suppose without loss of generality that $wv\in P_1^v$, and since $G_v$ is a complete $k_v$-partite graph, with $k_v\ge 4$, we can consider a subset $\{u,s\}$ of $V(G)-\{v,w,x,y\}$ such that $vu\in P_3^v$ and $vs\in P_4^v$. Given that $\{u,s\}$ is a subset of $V(G)-\{v,x,y,w\}$, it follows from Claim~1 that $uw\notin P_1^w$ and $sw\notin P_1^w$. Suppose without loss of generality that $wu\in P_1^{u}$ and $ws\in P_1^s$.\\
	\textbf{Claim~3.} $\{vu,wu,yu\}\subseteq P_1^{u}$.
	\begin{proof}
		Since $wu\in P_1^{u}$, it suffices to prove that $\{vu,yu\}\subseteq P_1^{u}$.\\
		First, we prove that $vu\in P_1^{u}$. Proceeding by contradiction, suppose that $vu\notin P_1^{u}$, thus by Observation~\ref{obs:equiv4} and given that $yv\in P_2^v\cap P_1^y$, $vu\in P_3^v-P_1^{u}$, $uw\in P_1^{u}-P_1^w$ y $wy\in P_1^w\cap P_3^y$, we have that $(y,v,u,w,y)$ is an $H$-cycle of length 4 in $G$ containing $v$, which is impossible. Therefore $vu\in P_1^{u}$.\\
		
		Now, we prove that $yu\in P_1^{u}$. Proceeding by contradiction, suppose that $yu\notin P_1^{u}$.\\
		\hspace*{0.5cm}When $uy\in P_2^y$, Observation~\ref{obs:equiv4}, $vy\in P_2^{v}\cap P_1^y$, $yu\in P_2^y-P_1^{u}$, $uw\in P_1^{u}- P_1^{w}$ and $wv\in P_1^{v}\cap P_1^w$ imply that $(v,y,u,w,v)$ is an $H$-cycle of length 4 in $G$ containing $v$, a contradiction. \\
		\hspace*{0.5cm}When $uy\notin P_2^y$, we have from the previous observation that $(v,u,y,x,v)$ is an $H$-cycle of length 4 in $G$ containing $v$ (as $vu\in P_1^{u}\cap P_3^v$, $uy\notin P_1^{u}\cup P_2^y$, $yx\in P_2^y\cap P_2^x$ and $xv\in P_1^v\cap P_1^x$), which is impossible.\\ 
		We conclude that $xu\in P_1^{u}$.
	\end{proof}
	
	Proceeding in a very similar way to the proof of Claim~3 (taking $s$ instead of $u$), we can prove that:
	
	\textbf{Claim 4.} $\{vs,ws,ys\}\subseteq P_1^{s}$.
	\vspace*{0.3cm}
	
	\textbf{Claim 5.} $us\in P_1^{u}$ or $us\in P_1^{s}$.
	\begin{proof}
		Otherwise, if $us\notin P_1^{u}$ and $us\notin P_1^{s}$, then by Observation~\ref{obs:equiv4} and given that $wu\in P_1^{u}-P_1^w$, $us\notin P_1^{u}\cup P_1^s$, $sv\in P_1^s\cap P_4^v$ and $vw\in P_1^v\cap P_1^w$, we have that $(w,u,s,v,w)$ is an $H$-cycle in $G$ of length 4 containing $v$, which is impossible.
	\end{proof}
	
	By Claim~5, we have that $us\in P_1^{u}$ or $us\in P_1^{s}$.\\
	\hspace*{0.5cm}When $us\in P_1^{u}$, we have by Claim~3 that $\{vu,wu,xu,su\}\subseteq P_1^{u}$. Moreover, $\delta_G(u)=6$, so $k_u\le 3$, which is impossible.\\
	\hspace*{0.5cm}When $us\in P_1^{s}$, Claim~4 implies that $\{vs,ws,xs,us\}\subseteq P_1^{s}$. Since $\delta_G(s)=6$, it follows that $k_s\le 3$, a contradiction.
\end{description}
Therefore every vertex in $G$ is contained in an $H$-cycle of length 4, whenever $|V(G)|=7$.\\

\hspace*{0.5cm}\hspace*{0.5cm}The case $n=8$, can be easily carried as a consequence of the case $n=7$, since for each $x$ in $V(G)$, $G-x$ satisfies the hypotheses of Theorem~\ref{teo:4Hciclo-1}.\\

This concludes the proof of Theorem~\ref{teo:4Hciclo-1}. $\square$\\

\textbf{Proof of Theorem~\ref{teo:4Hciclo-2}}

Let $G$ and $H$ be graphs as in the hypotheses, thus for every $x$ en $V(G)$, $k_x\ge 5$. Proceeding by contradiction, suppose that there exists a vertex $v$ in $V(G)$ such that $v$ is not contained in an $H$-cycle of length 4. \\
It follows by Theorem~\ref{teo:3Hciclo} that there exists an $H$-cycle of length 3 containing $v$, say $T=(u,v,w,u)$. By Observation~\ref{obs:equiv}, we have that $uv$ and $vw$ are in different parts of the $k_v$-partition of $V(G_v)$, $vw$ and $wu$ are in different parts of the $k_w$-partition of $V(G_w)$, and $wu$ and $uv$ are in different part of the $k_u$-partition of $V(G_u)$. Without loss of generality, suppose that $uv\in P_1^v$ and $vw\in P_2^v$, $vw\in P_2^w$ and $wu\in P_1^w$, and $wu\in P_2^{u}$ and $uv\in P_1^{u}$.\\

The proof of Theorem~\ref{teo:4Hciclo-2} is divided into two cases, depending on whether there exists a vertex $x$ fulfilling certain properties or not. In each case, a series of claims is proven, which will lead us to a contradiction.\\

\noindent\textbf{Case 1.} There exists a vertex $x$ in $V(G)-\{u,v,w\}$ such that
\begin{itemize}
	\item $xu\in P_2^{u}$, $xw\in P_1^{w}$,
	\item $ux$ and $wx$ are in the same part of the $k_x$-partition of $V(G_x)$,
	\item $vx$ and $ux$ are in different part of the $k_x$-partition of $V(G_x)$,
	\item $xv\notin P_1^{v}\cup P_2^v$.
\end{itemize}

Suppose without loss of generality that $\{ux,wx\}\subseteq P_1^{x}$, $vx\in P_2^x$ and $xv\in P_3^v$.\\

\textbf{Claim 1.} For every $y$ in $V(G)-\{u,v,w,x\}$, if $yw\notin P_1^w\cup P_2^w$, then
\begin{description}
	\item(1) $uy$, $vy$, $wy$ and $xy$ are in the same part of the $k_y$-partition of $V(G_y)$,
	\item(2) $uy\notin P_2^{u}$, $xy\notin P_1^x$, and $vy\notin P_2^v$.
\end{description}
\begin{proof}
	Let $y$ be a vertex in $V(G)-\{u,v,w,x\}$ such that $yw\notin P_1^w\cup P_2^w$. Suppose that $wy\in P_1^y$, thus the item (1) can be rewritten as follows: $\{uy,vy,wy,xy\}\subseteq P_1^y$.\\ 
	
	First, proceeding by contradiction, we prove that $vy\in P_1^y$. \\
	\hspace*{0.5cm}If $vy\in P_1^v$, then by Observation~\ref{obs:equiv4} and given that $wx\in P_1^w\cap P_1^x$, $xv\in P_2^x\cap P_3^v$, $vy\in P_1^v-P_1^y$ and $yw\in P_1^y-P_1^w$, we have that $(w,x,v,y,w)$ is an $H$-cycle of length 4 in $G$ containing $v$. A contradiction.\\
	\hspace*{0.5cm}If $vy\notin P_1^v$, then by Observation~\ref{obs:equiv4} with $wu\in P_1^w\cap P_2^{u}$, $uv\in P_1^v\cap P_1^{u}$, $vy\notin P_1^v\cup P_1^y$ and $yw\in P_1^y-P_1^w$, we have that $(w,u,v,y,w)$ is an $H$-cycle of length 4 in $G$ containing $v$, which is impossible.\\
	We conclude that $vy\in P_1^y$.\\
	
	Again, proceeding by contradiction, we will prove that $vy\notin P_2^{v}$. \\
	It follows from Observation~\ref{obs:equiv} that $v$ is an obstruction of the cycle $C=(y,v,w,y)$, since $vw\in P_2^{v}$. By the same observation, we have that $y$ is an obstruction of the cycle $C$ (as $\{vy,wy\}\subseteq P_1^y$), and $w$ is not an obstruction of the cycle $C$ (recall that $vw\in P_2^w$ and $yw\notin P_2^w$). Thus, the cycle $C$ has exactly  2 obstructions (namely $v$ and $y$), with $\ell(C)=3$, which is impossible because of the hypothesis~3 of Theorem~\ref{teo:4Hciclo-2}. Therefore $vy\notin P_2^{v}$.\\
	
	Proceeding by contradiction, we will prove that $xy\in P_1^y$. \\
	Notice that, under our assumption $xy\in P_2^x$. Otherwise, if $xy\notin P_2^x$, then by Observation~\ref{obs:equiv4} and given that $wv\in P_2^v\cap P_2^w$, $vx\in P_2^x\cap P_3^v$, $xy\notin P_1^y\cup P_2^x$ and $yw\in P_1^y-P_2^w$, we have that $(w,v,x,y,w)$ is an $H$-cycle of length 4 in $G$ containing $v$, which is impossible. Hence, by Observation~\ref{obs:equiv4}, $(x,y,v,w,x)$ is an $H$-cycle of length 4 in $G$ containing $v$ (as $xy\in P_2^x-P_1^y$, $yv\in P_1^y- P_2^v$, $vw\in P_2^v\cap P_2^w$ and $wx\in P_1^w\cap P_1^x$), a contradiction. Therefore~$xy\in P_1^y$.\\
	
	Proceeding by contradiction, we will prove that $xy\notin P_1^{x}$. \\
	By Observation~\ref{obs:equiv}, we have the following assertions: $x$ is an obstruction of the cycle $C'=(y,x,w,y)$ (because $wx\in P_1^{x}$), $y$ is an obstruction of the cycle $C'$ (as $\{xy,wy\}\subseteq P_1^y$), and $w$ is not an obstruction of the cycle $C'$ (recall that $xw\in P_1^w$ and $yw\notin P_1^w$). Hence, the cycle $C'$ has exactly two obstructions (namely $x$ and $y$), with $\ell(C')=3$, contradicting the hypothesis~3 of Theorem~\ref{teo:4Hciclo-2}. Therefore $xy\notin P_1^{x}$.\\
	
	Proceeding by contradiction, we will see that $uy\in P_1^y$. \\
	Observe that, under our assumption, $uy\in P_1^u$. Otherwise, if $uy\notin P_1^{u}$, then by Observation~\ref{obs:equiv4}, we have that $(w,v,u,y,w)$ is an $H$-cycle of length 4 in $G$ containing $v$ (recall that $wv\in P_2^v\cap P_2^w$, $vu\in P_1^v\cap P_1^{u}$, $uy\notin P_1^{u}\cup P_1^y$ and $yw\in P_1^y-P_2^w$), which is impossible. Hence, by Observation~\ref{obs:equiv4}, $(u,y,v,w,u)$ is an $H$-cycle of length 4 in $G$ containing $v$ (since $uy\in P_1^{u}-P_1^y$, $yv\in P_1^y-P_2^v$, $vw\in P_2^v\cap P_2^w$ and $wu\in P_1^w\cap P_2^{u}$), a contradiction. Therefore $uy\in P_1^y$.\\
	
	Finally, proceeding by contradiction we will show that $uy\notin P_2^{u}$. \\
	Thus by Observation~\ref{obs:equiv}, we have the following three assertions: $u$ is an obstruction of the cycle $C''=(y,u,w,y)$ (because $uw\in P_2^{u}$), $y$ is an obstruction of the cycle $C''$ (as $\{uy,wy\}\subseteq P_1^y$), and $w$ is not an obstruction of the cycle $C''$ (recall that $uw\in P_1^w$ and $yw\notin P_1^w$). Hence, the cycle $C''$ has exactly two obstructions (namely $u$ and $y$), with $\ell(C)=3$, which is impossible because of the hypothesis~3 of Theorem~\ref{teo:4Hciclo-2}. Therefore $uy\notin P_2^{u}$.\\ 
	
	This concludes the proof of Claim~1.
\end{proof}

For every $i$ in $\{1,2,\cdots, k_w\}$, let $N_i^w=\{y\in N_G(w):yw\in P_i^w\}$ be, and consider $Z=\{y\in N_G(w)-(N_1^w\cup N_2^w):uy\in P_1^{u}\}$.

\textbf{Claim~2.} $Z\subseteq N_i^w$ for some $i$ in $\{3,\cdots, k_w\}$.

\begin{proof}
	Let $y_1$ be in $Z$, and w.l.o.g. assume that $uy_1\in P_i^{w}$ for some fixed $i$, $i>2$. We will prove that $Z\subseteq N_i^{w}$.\\
	Let $y_2$ be in $Z\setminus\{y_1\}$. Given that $\{y_1,y_2\}$ is a subset of $Z$, we have that $y_1\notin N_1^w\cup N_2^w$ and $y_2\notin N_1^w\cup N_2^w$, that is, $y_1 w\notin P_1^w\cup P_2^w$ and $y_2w\notin P_1^w\cup P_2^w$. By Claim~1, $wy_j$ and $uy_j$ are in the same part of the $k_{y_j}$-partition of $V(G_{y_j})$, for $j$ in $\{1,2\}$, so by Observation~\ref{obs:equiv}, $y_j$ is an obstruction of the path $(w,y_j,u)$. Also, as $\{y_1,y_2\}$ is a subset of $Z$, we have that  $\{uy_1,uy_2\}\subseteq P_1^{u}$, which implies by the previous observation that $u$ is an obstruction of the path $(y_1,u,y_2)$. Thus $u$, $y_1$ and $y_2$ are obstructions of the cycle $C=(w,y_1,u,y_2,w)$.\\
	If $w$ is not an obstruction of the cycle $C$, then the cycle $C$ has exactly 3 obstructions (namely $u$, $y_1$ y $y_2$), with $\ell(C)=4$, contradicting the hypothesis~2 of Theorem~\ref{teo:4Hciclo-2}, so $w$ is an obstruction of the cycle $C$. Hence, by Observation~\ref{obs:equiv}, $wy_1$ and $wy_2$ are in the same part of the $k_{w}$-partition of $V(G_w)$, and given that $wy_1$ is in $P_i^w$, it follows that $wy_2\in P_i^w$, which implies that $y_2\in N_i^w$.\\
	Therefore $Z\subseteq N_i^w$ for some $i$ in $\{3,\cdots, k_w\}$.
\end{proof}

Suppose without loss of generality that $Z\subseteq N_3^w$, and let $A=\bigcup_{i=4}^{k_w} N_i^w$ be and $S=V(G)-(A\cup\{u,v,w,x\})$. Notice that $|A|\neq 0$, since $k_w\geq 5$; moreover $|A|\geq 2$.

\textbf{Claim 3.} $A$ has the $H$-dependency property with respect to the vertex~$v$.

\begin{proof}
	Let $\{a,a'\}$ be a subset of $A$. As $\{a,a'\}$ is a subset of $A$, it follows that $a\notin N_1^w\cup N_2^w$ and $a'\notin N_1^w\cup N_2^w$, that is, $a w\notin P_1^w\cup P_2^w$ and $a' w\notin P_1^w\cup P_2^w$. Now, by Claim~1, $wa$ and $va$ are in the same part of the $k_a$-partition of $V(G_a)$, and $wa'$ and $va'$ are in the same part of the $k_{a'}$-partition of $V(G_{a'})$. Say that $\{va,wa\}\subseteq P_1^{a}$ and $\{va',wa'\}\subseteq P_1^{a'}$.\\
	Proceeding by contradiction, suppose that $A$ has not the $H$-dependency  property with respect to the vertex $v$, that is, $a$ is not an obstruction of the path $(v,a,a')$ and $a'$ is not an obstruction of the path $(v,a',a)$. This implies, by Observation~\ref{obs:equiv}, that $va$ and $aa'$ are in different parts of the $k_a$-partition of $V(G_a$), and $va'$ and $aa'$ are in different part of the $k_{a'}$-partition of $V(G_{a'})$. Since $va$ is in $P_1^{a}$ and $va'$ is in $P_1^{a'}$, we have that $aa'\notin P_1^{a}\cup P_1^{a'}$; moreover, as $wa'$ is not in $P_1^w\cup P_2^w$, it follows by Claim~1 that $va'\notin P_2^v$. Hence, by Observation~\ref{obs:equiv4} and given that $wa\in P_1^{a}-P_2^w$, $aa'\notin P_1^{a}\cup P_1^{a'}$, $a' v\in P_1^{a'}-P_2^v$ and $vw\in P_2^v\cap P_2^w$, it follows that $(w,a,a',v,w)$ is an $H$-cycle of length 4 in $G$ containing $v$, which is impossible by our assumption.\\
	Therefore $A$ has the $H$-dependency property with respect to the vertex~$v$.
\end{proof}

Given that $A$ has the $H$-dependency property with respect to the vertex $v$, we have by Proposition~\ref{prop:hdep} that there exists a vertex $a$ in $A$ such that $l_a^D\leq\frac{t+1}{2}$, where $D=G[A]$ and $t=|A|$, and $a$ is an obstruction of the path $(v,a,a')$ for some $a'$ in $N_D(a)$ (recall that $|A|\geq 2$). Since $a$ is in $A$, it follows that $a\notin N_1^w\cup N_2^w$, that is, $wa\notin P_1^w\cup P_2^w$, which implies by Claim~1 that $ua$, $va$, $wa$ and $xa$ are in the same part of the $k_a$-partition of $V(G_a)$, and $va\notin P_2^v$. Suppose that $va\in P_1^{a}$, so $\{ua,va,wa,xa\}\subseteq P_1^{a}$, moreover, as $a$ is an obstruction of the path $(v,a,a')$, it follows by Observation~\ref{obs:equiv} that $aa'\in P_1^{a}$.\\
Denoting by $s=min\{|S|,3\}$ (recall that $S=V(G)-(A\cup\{u,v,w,x\})$), $S_1=N_1^w-\{u,x\}$, $S_2=N_2^w-\{v\}$ and $S_3=N_3^w$, we have that $S=S_1\cup S_2\cup S_3$ and $n=|S|+|A|+|\{u,v,w,x\}|=|S|+t+4\ge s+t+4$.

\textbf{Claim~4.} $k_a\le l_a^D+s$ (see~Notation~\ref{not:lxd}).

\begin{proof}
	Recall that $G_a$ is a complete $k_a$-partite graph for some $k_a$ in $\mathbb{N}$, and $\mathcal{P}=\{P_1^{a},P_2^{a},\cdots, P_{k_a}^{a}\}$ is the $k_a$-partition of $V(G_a)$ into independent sets. By the Observation~\ref{obs:kxinducida}, since $D$ is an induced subgraph of $G$, we have that $\mathcal{Q}=\{P_i^{a}\cap V(D_a): P_i^{a}\cap V(D_a)\neq \emptyset, i\in \{1,2,\cdots, k_a\}\}$ is the $l_{a}^D$-partition of $V(D_a)$ into independent sets, where $V(D_a)=\{e\in E(D):e\ \mbox{is incident with}\ a\}$. Renaming, if necessary, we say that $\mathcal{Q}=\{P_1^{a}\cap V(D_a),P_2^{a}\cap V(D_a),\cdots, P_{l_a^D}^{a}\cap V(D_a)\}$ is the $l_a^D$-partition of $V(D_a)$ into independent sets.\\
	Now, in order to prove this upper bound of $k_a=k_a^{G}$ in terms of $l_a^{D}$, consider the edges incident with $a$ that are not vertices in $D_a$, that is, $V(G_a)-V(D_a)=\{ua,va,wa,xa\}\cup \{ay:y\in S\}$. Since $\{ua,va,wa,xa\}\subseteq P_1^{a}$ and $a'a\in P_1^{a}$, with $a'\in N_D(a)$, when we consider the edges $ua$, $va$, $wa$ and $xa$, we add no extra part in $\mathcal{P}$ with respect to the number of parts of $\mathcal{Q}$. Given this, it is enough to analyze what happens when we consider the edges $ya$, with $y\in S$. We will see that, when we consider such edges, we add at most three parts in the $k_a$-partition of $V(G_a)$ with respect to the number of parts in the $l_a^D$-partition of $V(D_a)$, one part for each set $S_i$, with $i$ in $\{1,2,3\}$.\\
	Let $y$ be in $S$, hence $y\in S_1\cup S_2\cup S_3$ (moreover, by the definition of the sets $S_1$, $S_2$ and $S_3$, we have that $yw\in P_1^w\cup P_2^w\cup P_3^w$). In view of this, we divide the rest of the proof of Claim~4 into 3 cases, depending on whether $y$ is in $S_1$, $S_2$ or $S_3$.
	
	\textbf{Case 4.1.} $y\in S_1$. \\
	Thus $wy\in P_1^w$, in particular $wy\notin P_2^w$. W.l.o.g. suppose that~$wy\in P_1^y$.
	
	\textbf{Claim 4.1.} For every $z$ in $S_1$, if $wz$ is in $P_1^z$, then $az\in P_1^z\cup P_1^{a}$.
	\begin{proof}
		Let $z$ be in $S_1$, thus $zw\in P_1^w$ (in particular $zw\notin P_2^w$) and w.l.o.g suppose that $wz\in P_1^z$. Proceeding by contradiction, suppose that $az\notin P_1^z\cup P_1^{a}$. Since $wv\in P_2^v\cap P_2^w$, $va\in P_1^{a}-P_2^v$, $az\notin P_1^z\cup P_1^{a}$ and $zw\in P_1^z- P_2^w$, we have by Observation~\ref{obs:equiv4}  that $(w,v,a,z,w)$ is an $H$-cycle of length 4 in $G$ containing $v$, which is impossible, and Claim~4.1 holds.
	\end{proof}
	
	Given that $y\in S_1$, with $wy\in P_1^y$, we have by Claim~4.1 that $ay\in P_1^y\cup P_1^{a}$. If $ay\in P_1^{a}$, then when we consider that edge, we add no extra part in the $k_a$-partition of $V(G_a)$ with respect to the number of parts of the $l_a^D$-partition of $V(D_a)$, as $a'a\in P_1^{a}$, with $a'\in N_D(a)$. Now, we can assume that $ay\notin P_1^{a}$, which implies that $ay\in P_1^y$ and $ay\in P_\alpha^{a}$ for some $\alpha\neq 1$ (moreover, we can assume that $\alpha>l_a^D$, otherwise, the edge $ay$ is already counted in the first $l_a^D$ parts of the partition of $V(G_a)$). 
	
	\textbf{Claim 4.2.} For every $y'$ in $S_1$, $ay'\in P_1^{a}$ or $ay'\in P_\alpha^{a}$.
	\begin{proof}
		Let $y'$ be in $S_1-\{y\}$, and w.l.o.g. suppose that $wy'\in P_1^{y'}$. By Claim~4.1, we have that $ay'\in P_1^{y'}\cup P_1^{a}$. Assuming that $ay'\notin P_1^{a}$, we will see that $ay'\in P_\alpha^{a}$. \\
		First notice that $ay'\in P_1^{y'}$. Since $\{ay',wy'\}$ is a subset of $P_1^{y'}$, it follows by Observation~\ref{obs:equiv} that $y'$ is an obstruction of the walk $(a,y',w)$. By the same argument, $y$ is an obstruction of the path $(a,y,w)$ (recall that $\{wy,ay\}\subseteq P_1^y$) and $w$ is an obstruction of the path $(y,w,y')$ (as $\{wy,wy'\}\subseteq P_1^w$). Thus $w$, $y$ and $y'$ are obstructions of the cycle $C=(w,y,a,y',w)$. If $a$ is an obstruction of the cycle $C$, then $C$ has exactly 3 obstructions (namely $w$, $y$ and $y'$), contradicting the hypothesis~2 of Theorem~\ref{teo:4Hciclo-2}, so $a$ is not an obstruction of the cycle $C$. By Observation~\ref{obs:equiv}, $ay$ and $ay'$ are in the same part of the $k_a$-partition of $V(G_a$), and since $ay$ is in $P_\alpha^{a}$, it follows that $ay'\in P_\alpha^{a}$, as desired.
	\end{proof}
	Since for every $y'$ in $S_1$, $ay'\in P_1^{a}$ or $ay'\in P_{\alpha}^{a}$, with $\alpha$ in $\{l_a^D+1,\cdots,k_{a}\}$, we have that, when we consider the edges $y'a$, with $y'$ in $S_1$, we add at most one part in $\mathcal{P}$ with respect to the number of parts in $\mathcal{Q}$, namely $P_{\alpha}^{a}$.
	
	\textbf{Case 4.2.} $y\in S_2$. \\
	Thus $wy\in P_2^w$, w.l.o.g. assume that $wy\in P_1^y$.
	
	\textbf{Claim 4.3.} For every $z$ in $S_2$, if $wz$ is in $P_1^z$, then $vz\in P_1^z$ and $az\in P_1^z\cup P_1^{a}$.
	\begin{proof}
		Let $z$ be in $S_2$, thus $wz\in P_2^w$ and w.l.o.g. suppose that $wz\in P_1^z$. \\
		Now, proceeding by contradiction we will prove that $vz\in P_1^z$; so suppose that $vz\notin P_1^z$.\\
		\hspace*{0.5cm}If $vz\in P_3^v$, then by Observation~\ref{obs:equiv} and given that $wu\in P_1^w\cap P_2^{u}$, $uv\in P_1^v\cap P_1^{u}$, $vz\in P_3^v-P_1^z$ and $zw\in P_1^z\cap P_2^w$, we have that $(w,u,v,z,w)$ is an $H$-cycle of length 4 in $G$ containing $v$, which is impossible.\\
		\hspace*{0.5cm}If $vz\notin P_3^v$, then by the same observation with $wx\in P_1^w\cap P_1^x$, $xv\in P_2^x\cap P_3^v$, $vz\notin P_3^v\cup P_1^z$ and $zw\in P_1^z\cap P_2^w$, we have that $(w,x,v,z,w)$ is an $H$-cycle of length 4 in $G$ containing $v$, a contradiction.\\
		We conclude that $vz\in P_1^z$.\\
		
		Now, we will see that $az\in P_1^z\cup P_1^{a}$. Proceeding by contradiction, suppose that $az\notin P_1^z\cup P_1^{a}$.\\
		\hspace*{0.5cm}If $vz\in P_2^v$, then by Observation~\ref{obs:equiv} and given that $vu\in P_1^v$, it follows that $v$ is not an obstruction of the path $(z,v,u)$. By the same argument, $a$ is not an obstruction of the path $(u,a,z)$ (as $ua\in P_1^{a}$ and $az\notin P_1^{a}$), and $z$ is not an obstruction of the path $(a,z,v)$ (recall that $zv\in P_1^z$ and $az\notin P_1^z$). Since $a$ is in $A=\bigcup_{i=4}^{k_w}N_i^w$, it follows that $a\notin N_3^w$, with $Z\subseteq N_3^w$ (see Claim~2), implying that $a\notin Z$. Hence, by the definition of the set $Z$, we have that $ua\notin P_1^{u}$, and given that $vu$ is in $P_1^{u}$, we have by the Observation~\ref{obs:equiv} that $u$ is not an obstruction of the path $(v,u,a)$. Thus $z$, $v$, $u$ and $a$ are not obstructions of the cycle $C=(z,v,u,a,z)$, and by Observation~\ref{obs:equiv4}, $C$ is an $H$-cycle of length 4 in $G$ containing $v$, which is impossible.\\
		\hspace*{0.5cm}If $vz\notin P_2^v$, then by Observation~\ref{obs:equiv4} we have that $(w,v,z,a,w)$ is an $H$-cycle of length 4 in $G$ containing $v$ (since $wv\in P_2^v\cap P_2^w$, $vz\in P_1^z-P_2^v$, $za\notin P_1^z\cup P_1^{a}$ and $aw\in P_1^{a}-P_2^w$), a contradiction. \\
		We conclude that $az\in P_1^z\cup P_1^{a}$.
	\end{proof} 
	
	Since $y\in S_2$, with $wy\in P_1^y$, we have by Claim~4.3 that $ay\in P_1^y\cup P_1^{a}$. If $ay\in P_1^{a}$, then when we consider this edge, we add no extra part in the $k_a$-partition of $V(G_a)$ with respect to the number of parts of the $l_a^D$-partition of $V(D_a)$, as $a'a\in P_1^{a}$, with $a'\in N_D(a)$. Hence, we can assume that $ay\notin P_1^{a}$, which implies that $ay\in P_1^y$ and $ay\in P_\alpha^{a}$ for some $\alpha\neq 1$ (moreover, we can assume that $\alpha>l_a^D$, otherwise, the edge $ay$ is already counted in the first $l_a^D$ parts of the partition of V($G_a$)).\\
	Proceeding in a very similar way to the proof of Claim~4.2 (taking $S_2$ instead of $S_1$ and $P_2^w$ instead of $P_1^w$) we can prove that:
	
	\textbf{Claim 4.4.} For every $y'$ in $S_2$, $ay'\in P_1^{a}$ or $ay'\in P_\alpha^{a}$.\\
	
	Since for every $y'$ in $S_2$, $ay'\in P_1^{a}$ or $ay'\in P_{\alpha}^{a}$, with $\alpha$ in $\{l_a^D +1,\cdots,k_a\}$, we have that, when we consider the edges $y'a$ with $y'$ in $S_2$, we add at most one part in $\mathcal{P}$ with respect to the number of parts in $\mathcal{Q}$, namely $P_{\alpha}^{a}$.
	
	\textbf{Case 4.3.} $y\in S_3$.\\
	Thus $wy\in P_3^w$, in particular $wy\notin P_2^w$. To prove the Case~4.1 of Claim~4, we only need that $wy\notin P_2^w$ as an extra information. Hence, the proof of this Case~4.3 is exactly the same to the proof of the Case~4.1.\\
	Therefore, when we consider the edges $ya$, with $y\in S_3$, we add at most one part in $\mathcal{P}$ with respect to the number of parts in $\mathcal{Q}$.\\
	
	This concludes the proof of Claim~4.
\end{proof}

Now we finish the proof of the Case~1 of Theorem~\ref{teo:4Hciclo-2}. \\
By Claim~4, we have that $l_a^D+s\ge k_a$, and since $k_a\ge \frac{n+1}{2}$ (by hypothesis) and $\frac{t+1}{2}\ge l_a^D$, it follows that $$\frac{t+1}{2}+s\ge l_a^D+s\ge k_a\ge \frac{n+1}{2}.$$ Simplifying this inequality we obtain that $t+2s\ge n$, and since $n\ge s+t+4$, we have that $t+2s\ge s+t+4$, that is, $s\ge 4$, which is impossible because $s=min\{|S|,3\}\le 3$.\\

This concludes the Case 1.\\

\noindent\textbf{Case 2.} There exists no vertex $x$ in $V(G)-\{u,v,w\}$ with the following properties
\begin{itemize}
	\item $xu\in P_2^{u}$, $xw\in P_1^{w}$,
	\item $ux$ and $wx$ are in the same part of the $k_x$-partition of $V(G_x)$,
	\item $vx$ and $ux$ are in different part of the $k_x$-partition of $V(G_x)$,
	\item $xv\notin P_1^{v}\cup P_2^v$.
\end{itemize}

\textbf{Claim 5.} For every $x$ in $V(G)-\{u,v,w\}$, if $vx\notin P_1^v\cup P_2^v$, then
\begin{enumerate}
	\item $ux$, $vx$ y $wx$ are in the same part of the $k_x$-partition of $V(G_x)$,
	\item $wx\notin P_2^w$ and $ux\notin P_1^{u}$.
\end{enumerate}
\begin{proof}
	Let $x$ be a vertex in $V(G)-\{u,v,w\}$ such that $vx\notin P_1^v\cup P_2^v$. Suppose without loss of generality that $vx\in P_1^x\cap P_3^v$, thus the item (1) can be rewritten as follows: $\{ux,vx,wx\}\subseteq P_1^x$.\\
	
	To prove Claim~5, first we establish the two following claims. \\
	
	\textbf{Claim~5.1.} $wx\in P_1^w\cup P_1^x$ and $ux\in P_2^{u}\cup P_1^x$.
	\begin{proof}
		First, proceeding by contradiction we will prove that $wx\in P_1^w\cup P_1^x$. So suppose that $wx\notin P_1^w\cup P_1^x$. Since $wu\in P_1^w\cap P_2^{u}$, $uv\in P_1^{u}\cap P_1^v$, $vx\in P_3^v\cap P_1^x$ and $xw\notin P_1^w\cup P_1^x$, we have by Observation~\ref{obs:equiv4} that $(w,u,v,x,w)$ is an $H$-cycle of length 4 in $G$ containing $v$, which is impossible. Therefore $wx\in P_1^w\cup P_1^x$.\\
		
		Now, we will see that $ux\in P_2^{u}\cup P_1^x$. Proceeding by contradiction, suppose that $ux\notin P_2^{u}\cup P_1^x$. Given that $wu\in P_1^w\cap P_2^{u}$, $ux\notin P_2^{u}\cup P_1^x$, $xv\in P_3^v\cap P_1^x$ and $vw\in P_2^v\cap P_2^w$, it follows by Observation~\ref{obs:equiv4} that $(w,u,x,v,w)$ is an $H$-cycle of length 4 in $G$ containing $v$, a contradiction. Therefore $ux\in P_2^{u}\cup P_1^x$.
	\end{proof}
	
	\textbf{Claim~5.2.} If $wx\in P_1^x$, then $wx\notin P_2^w$, and if $ux\in P_1^x$, then $ux\notin P_1^{u}$.
	\begin{proof}
		First, we will prove that $wx\in P_1^x$ implies that $wx\notin P_2^w$. Proceeding by contradiction, suppose that $wx\in P_1^x\cap P_2^w$. Since $\{wx,vx\}$ is a subset of $P_1^x$, we have by Observation~\ref{obs:equiv} that $x$ is an obstruction of the path $(v,x,w)$. By the same observation, $w$ is an obstruction of the path $(v,w,x)$ (since $\{wx,wv\}\subseteq P_2^w$), and $v$ is not an obstruction of the path $(w,v,x)$ (recall that $vw\in P_2^v$ and $vx\in P_3^v$). Hence, $(v,x,w,v)$ is a cycle of length 3 in $G$ with exactly two obstructions (namely $w$ and $x$), which is impossible because of the hypothesis~3 of Theorem~\ref{teo:4Hciclo-2}. Therefore $wx\notin P_2^w$, whenever $wx\in P_1^x$.\\
		
		Now, we will see that $ux\in P_1^x$ implies that $ux\notin P_1^{u}$. Proceeding by contradiction, suppose that $ux\in P_1^x\cap P_1^{u}$. Given that $\{ux,vx\}$ is a subset of $P_1^x$, it follows by Observation~\ref{obs:equiv} that $x$ is an obstruction of the path $(v,x,u)$. By the same observation, $u$ is an obstruction of the path $(v,u,x)$ (as $\{ux,uv\}\subseteq P_1^{u}$), and $v$ is not an obstruction of the path $(u,v,x)$ (since $vu\in P_1^v$ and $vx\in P_3^v$). Hence, $(v,x,u,v)$ is a cycle of length 3 in $G$ with exactly two obstructions (namely $u$ and $x$), contradicting the hypothesis~3 of Theorem~\ref{teo:4Hciclo-2}. Therefore $ux\in P_1^x$ implies $ux\notin P_1^{u}$.
	\end{proof}
	
	To conclude, we split the proof of Claim~5 into 4 cases with respect to the set $P_1^x$.\\
	
	\textbf{Case 5.1.} $\{wx,ux\}\subseteq P_1^x$.\\
	Since $\{wx,ux\}\subseteq P_1^x$, Claim~5.2 implies that $wx\notin P_2^w$ and $ux\notin P_1^{u}$. Also, by the assumption of this case, $\{ux,vx,wx\}\subseteq P_1^x$. In this case, Claim~5 holds.\\
	
	\textbf{Case 5.2.} $wx\in P_1^x$ and $ux\notin P_1^x$.\\
	Given that $ux\in P_2^{u}\cup P_1^x$ (see Claim~5.1) and $ux\notin P_1^x$, we have that $ux\in P_2^{u}$; also, as $wx$ is in $P_1^x$, we have by Claim~5.2 that $wx\notin P_2^w$. Hence, by Observation~\ref{obs:equiv4}, $(v,u,x,w,v)$ is an $H$-cycle of length 4 in $G$ containing $v$ (since $vu\in P_1^v\cap P_1^{u}$, $ux\in P_2^{u}-P_1^{x}$, $xw\in P_1^{x}-P_2^{w}$ and $wv\in P_2^v\cap P_2^w$), which is a contradiction. Therefore this case is not possible.\\
	
	\textbf{Case 5.3.} $wx\notin P_1^x$ and $ux\in P_1^x$.\\	
	By the Claim~5.1, we have that $wx\in P_1^{w}\cup P_1^x$, but $wx\notin P_1^x$, thus $wx\in P_1^{w}$; also, as $ux$ is in $P_1^x$, it follows by Claim~5.2 that $ux\notin P_1^u$. Hence, by Observation~\ref{obs:equiv4}, $(v,u,x,w,v)$ is an $H$-cycle of length 4 in $G$ containing $v$ (recall that $vu\in P_1^v\cap P_1^{u}$, $ux\in P_1^{x}-P_1^{u}$, $xw\in P_1^{w}-P_1^{x}$ and $wv\in P_2^v\cap P_2^w$), a contradiction. Therefore this case is impossible.\\
	
	\textbf{Case 5.4.} $wx\notin P_1^x$ and $ux\notin P_1^x$.\\
	By Claim~5.1 we know that $wx\in P_1^w\cup P_1^x$ and $ux\in P_2^{u}\cup P_1^x$, but $wx\notin P_1^x$ and $ux\notin P_1^x$, thus $wx\in P_1^w$ and $ux\in P_2^{u}$. Since $vu\in P_1^{u}$ and $xu\in P_2^{u}$, we have by Observation~\ref{obs:equiv} that $u$ is not an obstruction of the path $(v,u,x)$. By the same observation, $v$ is not an obstruction of the path $(w,v,u)$ (given that $uv\in P_1^v$ and $wv\in P_2^v$), and $w$ is not an obstruction of the path $(v,w,x)$ (as $xw\in P_1^w$ and $vw\in P_2^w$). So $u$, $v$ and $w$ are not obstructions of the cycle $C=(v,w,x,u,v)$. If $x$ is not an obstruction of the cycle $C$, then by the Observation~\ref{obs:equiv4}, $C$ is an $H$-cycle of length 4 in $G$ containing $v$, which is impossible. Hence, $v$ is not an obstruction of the cycle $C$, which implies by Observation~\ref{obs:equiv} that $ux$ and $wx$ are in the same part of the $k_x$-partition of $V(G_x)$. Recall that $xu\in P_2^{u}$, $xw\in P_1^w$, and $xv\notin P_1^v\cup P_2^v$; also $ux\notin P_1^x$ and $vx\in P_1^x$ (that is, $ux$ and $vx$ are in different part of the $k_x$-partition of $V(G_x)$). This contradicts the assumption of the Case~2 of Theorem~\ref{teo:4Hciclo-2}. Therefore this case is not possible.\\
	
	This concludes the proof of Claim~5.
\end{proof} 

For every $i$ in $\{1,2,\cdots, k_v\}$, let $N_i^v=\{y\in N(v):zv\in P_i^v\}$ be, and consider $A=\bigcup_{i=3}^{k_v} N_i^v$ and $S=V(G)-(A\cup\{u,v,w\})$. Notice that $|A|\neq 0$, since $k_v\geq 5$; moreover $|A|\geq 2$.\\

\textbf{Claim 6.} $A$ has the $H$-dependency property with respect to the vertex~$v$.

\begin{proof}
	Let $\{a,a'\}$ be a subset of $A$. As $\{a,a'\}$ is a subset of $A$, it follows that $a\notin N_1^v\cup N_2^v$ and $a'\notin N_1^v\cup N_2^v$, that is, $a v\notin P_1^v\cup P_2^v$ and $a' v\notin P_1^v\cup P_2^v$. Now, by Claim~5, $va$ and $wa$ are in the same part of the $k_a$-partition of $V(G_a)$, and $va'$ and $wa'$ are in the same part of the $k_{a'}$-partition of $V(G_{a'})$. Suppose without loss of generality that $\{va,wa\}\subseteq P_1^{a}$ and $\{va',wa'\}\subseteq P_1^{a'}$.\\
	Proceeding by contradiction, suppose that $A$ has not the $H$-dependency property with respect to the vertex $v$, that is, $a$ is not an obstruction of the path $(v,a,a')$ and $a'$ is not an obstruction of the path $(v,a',a)$. This implies by Observation~\ref{obs:equiv}, that $va$ and $aa'$ are in different parts of the $k_a$-partition of $V(G_a$), and $va'$ and $aa'$ are in different part of the $k_{a'}$-partition of $V(G_{a'})$. Since $va$ is in $P_1^{a}$ and $va'$ is in $P_1^{a'}$, we have that $aa'\notin P_1^{a}\cup P_1^{a'}$; moreover, as $va$ is not in $P_1^v\cup P_2^v$, it follows by Claim~5 that $wa\notin P_2^w$. Next, by Observation~\ref{obs:equiv4} and given that $wa\in P_1^{a}-P_2^w$, $aa'\notin P_1^{a}\cup P_1^{a'}$, $a' v\in P_1^{a'}-P_2^v$ and $vw\in P_2^v\cap P_2^w$, we have that $(w,a,a',v,w)$ is an $H$-cycle of length 4 in $G$ containing $v$, which is impossible.\\
	Therefore $A$ has the $H$-dependency property with respect to the vertex~$v$.\\
\end{proof}

Given that $A$ has the $H$-dependency property with respect to the vertex $v$, we have by Proposition~\ref{prop:hdep} that there exists a vertex $a$ in $A$ such that $l_a^D\leq\frac{t+1}{2}$, where $D=G[A]$ and $t=|A|$, and $a$ is an obstruction of the path $(v,a,a')$ for some $a'$ in $N_D(a)$ (recall that $|A|\geq 2$). Since $a$ is in $A$, thus $a\notin N_1^v\cup N_2^v$, that is, $va\notin P_1^v\cup P_2^v$, this implies by Claim~5 that $ua$, $va$ and $wa$ are in the same part of the $k_a$-partition of $V(G_a)$, $wa\notin P_2^w$ and $ua\notin P_1^{u}$. Suppose that $va$ is in $P_1^{a}$, so $\{ua,va,wa\}\subseteq P_1^{a}$, moreover, as $a$ is an obstruction of the path $(v,a,a')$, it follows by Observation~\ref{obs:equiv} that $aa'\in P_1^{a}$.\\
Denoting by $s=min\{|S|,2\}$, $S_1=N_1^v-\{u\}$ and $S_2=N_2^v-\{w\}$, it follows that $S=S_1\cup S_2$ and $n=|S|+|A|+|\{u,v,w\}|=|S|+t+3\ge s+t+3$.\\

\textbf{Claim 7.} $k_a\le l_a^D+s$ (see~Notation~\ref{not:lxd}).

\begin{proof}
	Recall that $G_a$ is a complete $k_a$-partite graph for some $k_a$ in $\mathbb{N}$, and $\mathcal{P}=\{P_1^{a},P_2^{a},\cdots, P_{k_a}^{a}\}$ is the $k_a$-partition of $V(G_a)$ into independent sets. As $D$ is an induced subgraph of $G$, by Observation~\ref{obs:kxinducida} we have that $\mathcal{Q}=\{P_i^{a}\cap V(D_a): P_i^{a}\cap V(D_a)\neq \emptyset, i\in \{1,2,\cdots, k_a\}\}$ is the $l_{a}^D$-partition of $V(D_a)$ into independent sets, where $V(D_a)=\{e\in E(D):e\ \mbox{incide en}\ a\}$. Renaming, if necessary, we say that $\mathcal{Q}=\{P_1^{a}\cap V(D_a),P_2^{a}\cap V(D_a),\cdots, P_{l_a^D}^{a}\cap V(D_a)\}$ is the $l_a^D$-partition of $V(D_a)$ into independent sets.\\
	Now, in order to prove this upper bound of $k_a=k_a^{G}$ in terms of $l_a^{D}$, consider the edges incident with $a$ that are not vertices in $D_a$, that is, $V(G_a)-V(D_a)=\{ua,va,wa\}\cup \{ya:y\in S\}$. Since $\{ua,va,wa\}\subseteq P_1^{a}$ and $a'a\in P_1^{a}$, with $a'\in N_D(a)$, when we consider the edges $ua$, $va$ and $wa$, we add no extra part in $\mathcal{P}$ with respect to the number of parts of $\mathcal{Q}$. In view of this, it suffices to analyze what happens when we consider the edges $ya$, with $y\in S$. We will see that, when we consider such edges, we add at most 2 parts in the $k_a$-partition of $V(G_a)$ with respect to the number of parts of the $l_a^D$-partition of $V(D_a)$, one part for each set $S_i$, with $i$ en $\{1,2\}$.\\
	Let $y$ in $S$, hence $y\in S_1\cup S_2$ (moreover, by the definition of the sets $S_1$ and $S_2$, we have that $yv\in P_1^v\cup P_2^v$). We divide the rest of the proof of Claim 7 into two cases, depending on whether $y$ is in $S_1$ or $S_2$.\\
	
	\textbf{Case 7.1.} $y\in S_1$. \\
	Thus $vy\in P_1^v$ and suppose without loss of generality that $wy\in P_1^y$.\\
	
	\textbf{Claim~7.1.} For every $z$ in $S_1$, if $vz$ is in $P_1^z$, then $az\in P_1^z\cup P_1^{a}$.
	\begin{proof}
		Let $z$ be in $S_1$, thus $vz\in P_1^v$ and suppose w.l.o.g. that $vz\in P_1^z$. Proceeding by contradiction, suppose that $az\notin P_1^z\cup P_1^{a}$. Since $vz\in P_1^v\cap P_1^z$, $za\notin P_1^{z}\cup P_1^{a}$, $aw\in P_1^{a}-P_2^w$ and $wv\in P_2^v\cap P_2^w$, we have by Observation~\ref{obs:equiv4} that $(v,z,a,w,v)$ is an $H$-cycle of length 4 in $G$ containing $v$, which is impossible and Claim~7.1 holds.
	\end{proof}
	
	Given that $y\in S_1$, with $vy\in P_1^y$, we have by Claim~7.1 that $ay\in P_1^y\cup P_1^{a}$. When $ay\in P_1^{a}$, and we consider such edge, no extra part appears in the $k_a$-partition of $V(G_a)$, with respect to the number of parts of the $l_a^D$-partition of $V(D_a)$, as $a'a\in P_1^{a}$, with $a'\in N_D(a)$. Now, we can assume that $ay\notin P_1^{a}$, which implies that $ay\in P_1^y$ and $ay\in P_\alpha^{a}$ for some $\alpha\neq 1$ (moreover, we can assume that $\alpha>l_a^D$, otherwise, the edge $ay$ is already counted in the first $l_a^D$ parts of $V(G_a)$). 
	
	\textbf{Claim 7.2.} For every $y'$ in $S_1$, $ay'\in P_1^{a}$ or $ay'\in P_\alpha^{a}$.
	\begin{proof}
		Let $y'$ be in $S_1-\{y\}$, thus $vy'\in P_1^v$, and (w.l.o.g.) suppose that $vy'\in P_1^{y'}$. By Claim~7.1 we have that $ay'\in P_1^{y'}\cup P_1^{a}$. If $ay'\in P_1^{a}$ we are done, so suppose that $ay'\notin P_1^{a}$, which implies that $ay'\in P_1^{y'}$. We will see that $ay'\in P_\alpha^{a}$. \\
		Since $\{ay',vy'\}$ is a subset of $P_1^{y'}$, it follows by Observation~\ref{obs:equiv} that $y'$ is an obstruction of the path $(a,y',v)$ in $G$. By the same argument, $y$ is an obstruction of the path $(a,y,v)$ (as $\{vy,ay\}\subseteq P_1^y$) and $v$ is an obstruction of the path $(y,v,y')$ (recall that $\{vy,vy'\}\subseteq P_1^v$). So $v$, $y$ and $y'$ are obstructions of the cycle $C=(v,y,a,y',v)$. If $a$ is not an obstruction of the cycle $C$, then the cycle $C$ has exactly 3 obstructions (namely $v$, $y$ and $y'$), contradicting the hypothesis~2 of Theorem~\ref{teo:4Hciclo-2}, so $a$ is an obstruction of the cycle $C$. By Observation~\ref{obs:equiv}, $ay$ and $ay'$ are in the same part of the $k_a$-partition of $V(G_a$), and since $ay$ is in $P_\alpha^{a}$, it follows that $ay'\in P_\alpha^{a}$, as desired.
	\end{proof}
	Since for every $y'$ in $S_1$, $ay'\in P_1^{a}$ or $ay'\in P_\alpha^{a}$, with $\alpha$ in $\{l_a^D +1,\cdots, k_a\}$, we have that, when we consider the edges $ay'$, with $y'$ in $S_1$, we add at most one part in $\mathcal{P}$ with respect to the number of parts in $\mathcal{Q}$, namely $P_{\alpha}^{a}$.
	
	\textbf{Case 7.2.} $y\in S_2$. \\
	Thus $vy\in P_2^v$ and suppose without loss of generality that $wy\in P_1^y$.\\
	
	\textbf{Claim 7.3.} For every $z$ in $S_2$, if $vz$ is in $P_1^z$, then $az\in P_1^z\cup P_1^{a}$.
	\begin{proof}
		Let $z$ be in $S_2$, thus $vz\in P_2^v$ and suppose (w.l.o.g.) that $vz\in P_1^z$. Proceeding by contradiction, suppose that $az\notin P_1^z\cup P_1^{a}$. Since $vz\in P_2^v\cap P_1^z$, $za\notin P_1^{z}\cup P_1^{a}$, $au\in P_1^{a}-P_1^u$ and $uv\in P_1^v\cap P_1^{u}$, we have by Observation~\ref{obs:equiv4} that $(v,z,a,u,v)$ is an $H$-cycle of length 4 in $G$ containing $v$, a contradiction and Claim~7.3 holds.
	\end{proof}
	
	Given that $y\in S_2$, with $vy\in P_1^y$, we have by Claim~7.3 that $ay\in P_1^y\cup P_1^{a}$. If $ay\in P_1^{a}$, then when we consider that edge, we add no extra part in the $k_a$-partition of $V(G_a)$ with respect to the number of parts of the $l_a^D$-partition of $V(D_a)$, as $a'a\in P_1^{a}$, with $a'\in N_D(a)$. Now, we can assume that $ay\notin P_1^{a}$, which implies that $ay\in P_1^y$ and $ay\in P_\alpha^{a}$ for some $\alpha\neq 1$ (moreover, we can assume that $\alpha>l_a^D$, otherwise, the edge $ay$ is already counted in the first $l_a^D$ parts of $G_a$). \\
	Proceeding in a very similar way to the proof of Claim~7.2 (taking $S_2$ instead of $S_1$, and $P_2^v$ instead of $P_1^v$), we can prove the following assertion:
	
	\textbf{Claim 7.4.} For every $y'$ in $S_2$, $ay'\in P_1^{a}$ or $ay'\in P_\alpha^{a}$.\\
	
	Since for every $y'$ in $S_2$, $ay'\in P_1^{a}$ or $ay'\in P_\alpha^{a}$, with $\alpha$ in $\{l_a^D +1,\cdots, k_a\}$, we have that, when we consider the edges $ay'$, with $y'$ in $S_2$, we add at most one part in $\mathcal{P}$ with respect to the number of parts in $\mathcal{Q}$, namely $P_{\alpha}^{a}$.\\
	
	This concludes the proof of Claim~7.
\end{proof}

Now we finish the proof of the Case~2 of Theorem~\ref{teo:4Hciclo-2}. \\
By Claim~7, we have that $l_a^D+s\ge k_a$, and as $k_a\ge \frac{n+1}{2}$ (by hypothesis) and $\frac{t+1}{2}\ge l_a^D$, it follows that $$\frac{t+1}{2}+s\ge l_a^D+s\ge k_a\ge \frac{n+1}{2}.$$ Simplifying this inequality we obtain that $t+2s\ge n$, and since $n\ge s+t+3$, we have that $t+2s\ge s+t+3$, that is, $s\ge 3$, which is impossible because $s=min\{|S|,2\}\le 2$.\\

This concludes the Case 2.\\

Therefore, as we assumed that the conclusion of Theorem~\ref{teo:4Hciclo-2} is false, and each case led us to a contradiction, we conclude that each vertex of $G$ is contained in an $H$-cycle of length 4 in $G$. $\square$\\

We show some consequences of the main results of this paper, which are well-known results in the theory of properly colored walks. We need the following definition and the Observation~\ref{obs:general}.

\begin{defi}\label{defi:colordegree}
	Let $G$ be an edge-colored graph and $x$ be a vertex in $G$. We define the color degree of the vertex $x$, denoted by $\delta^c(x)$, as follows: $\delta^c(x)=|\{c(e):e\ \mbox{is an edge incident with}\ x\}|$.
\end{defi}

\begin{obs}\label{obs:general}
	Let $H$ be the complete graph without loops and $G$ be an $H$-colored graph. For every $u$ in $V(G)$:
	\begin{enumerate}
		\item $G_u$ is a complete $k_u$-partite graph for some $k_u$ in $\mathbb{N}$, moreover, $k_u=\delta^c(u)$.
		\item If $W$ is a walk in $G$, then $W$ is a properly colored walk if and only if $W$ is an $H$-walk. In particular, $W$ is a properly colored path (cycle) if and only if $W$ is an $H$-path ($H$-cycle).
		\item The graph $G$ does not contain a cycle of length 3 with exactly 2 obstructions.
		\item The graph $G$ does not contain a cycle of length 4 with exactly 3 obstructions.
	\end{enumerate}
\end{obs}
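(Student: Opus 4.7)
The plan is to observe that the single key fact about $H$ being the complete graph without loops is that $ab\in E(H)$ iff $a\neq b$. With this in hand, each of the four items reduces to a quick unpacking of definitions.

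For item (1), I would consider the equivalence relation on $V(G_u)$ defined by $e\sim e'$ iff $c(e)=c(e')$. By the definition of $G_u$, two distinct edges $e,e'$ incident with $u$ are adjacent in $G_u$ iff $c(e)c(e')\in E(H)$, which in our setting means $c(e)\neq c(e')$. Thus each $\sim$-class is an independent set of $G_u$, and any two edges from different classes are adjacent, exhibiting $G_u$ as a complete multipartite graph with one part per color used on the edges incident with $u$; hence $k_u=\delta^c(u)$. Item (2) is then immediate: being a properly colored walk and being an $H$-walk both translate into the condition that consecutive edges receive different colors (and for closed walks, that the last and first edges receive different colors).

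For items (3) and (4), I would write $c_{ij}=c(u_iu_j)$ for the edges of the cycle and note that, again because $H$ is the complete graph without loops, a vertex $u_i$ is an obstruction of the cycle iff the two consecutive colors at $u_i$ coincide. For a triangle $(u_1,u_2,u_3,u_1)$, the three obstruction conditions are the three equalities $c_{31}=c_{12}$, $c_{12}=c_{23}$, $c_{23}=c_{31}$ arranged cyclically; any two of them imply the third by transitivity, so the number of obstructions cannot equal $2$. For a $4$-cycle $(u_1,u_2,u_3,u_4,u_1)$, the four obstruction conditions are $c_{41}=c_{12}$, $c_{12}=c_{23}$, $c_{23}=c_{34}$, $c_{34}=c_{41}$, again cyclically arranged; any three of them imply the fourth by transitivity (since three consecutive equalities in a four-element cycle force all elements to agree, hence the remaining equality), so the number of obstructions cannot equal $3$.

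There is no real obstacle here; the entire argument is a transparent translation between "$H$-edge" and "distinct colors." The only mild bookkeeping is checking, for item (4), that each of the four possible choices of the non-obstructing vertex still leads to a chain of equalities forcing the remaining condition, but this follows uniformly from the cyclic transitivity argument above.
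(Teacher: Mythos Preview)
Your argument is correct and is essentially the natural verification of this observation; the paper itself states the result without proof, so there is nothing to compare against beyond noting that your unpacking of the definitions (color classes give the multipartite structure of $G_u$, and obstruction at a vertex means equal consecutive colors, whence the transitivity arguments for items (3) and (4)) is exactly what the authors leave implicit.
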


\begin{cor}[\cite{SFC}]\label{cor:1}
	Let $G$ be a edge-colored complete graph of order $n$. If $n\ge 3$ and $\delta^c(x)\ge \frac{n+1}{2}$ for every $x$ in $V(G)$, then each vertex is contained in a properly colored cycle of length 3.
\end{cor}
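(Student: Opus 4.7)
The plan is to deduce this corollary directly from Theorem~\ref{teo:3Hciclo} by choosing the auxiliary graph $H$ appropriately. Specifically, I would let $C$ denote the set of colors used in the edge-coloring of $G$ and take $H$ to be the complete graph on $C$ without loops. The given edge-coloring is then, by definition, an $H$-coloring $c:E(G)\to V(H)$, so $G$ becomes an $H$-colored complete graph of order $n\ge 3$, and the framework of the paper applies.

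Next I would verify both hypotheses of Theorem~\ref{teo:3Hciclo} in this setting. By Observation~\ref{obs:general}(1), for every vertex $x$ of $G$ the graph $G_x$ is a complete $k_x$-partite graph with $k_x=\delta^c(x)$, and the assumption $\delta^c(x)\ge \frac{n+1}{2}$ immediately gives hypothesis~1. Hypothesis~2, namely that $G$ contains no cycle of length~4 with exactly~3 obstructions, is exactly the content of Observation~\ref{obs:general}(4), so it holds for free.

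Applying Theorem~\ref{teo:3Hciclo} then yields, for each vertex $v$ of $G$, an $H$-cycle of length~3 through $v$. By Observation~\ref{obs:general}(2), an $H$-cycle for this choice of $H$ is precisely a properly colored cycle, so $v$ lies on a properly colored cycle of length~3, which is what the corollary asserts.

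There is essentially no obstacle here: all technical work has been pushed into Theorem~\ref{teo:3Hciclo} and Observation~\ref{obs:general}. The only point worth noting in the write-up is that one should make the choice of $H$ explicit and invoke the relevant parts of Observation~\ref{obs:general} to match the hypotheses of Theorem~\ref{teo:3Hciclo} and to translate $H$-cycles back into properly colored cycles.
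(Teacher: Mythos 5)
Your proposal is correct and matches the paper's intended derivation exactly: the paper places Observation~\ref{obs:general} immediately before the corollaries precisely so that one takes $H$ to be the complete graph without loops on the color set, checks the two hypotheses of Theorem~\ref{teo:3Hciclo} via parts (1) and (4) of that observation, and translates the resulting $H$-cycle back into a properly colored cycle via part (2). Nothing is missing.
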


\begin{cor}[\cite{SFC}]\label{cor:2}
	Let $G$ be a edge-colored complete graph of order $n$. If $n\ge 4$ and $\delta^c(x)\ge \frac{n+1}{2}$ for every $x$ in $V(G)$, then each vertex is contained in a properly colored cycle of length 4.
\end{cor}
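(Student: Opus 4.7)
The plan is to reduce Corollary~\ref{cor:2} to Corollary~\ref{cor:4Hciclo} by treating the edge-colored complete graph $G$ as an $H$-colored graph with $H$ equal to the complete graph on the color set (without loops). Under this identification, Observation~\ref{obs:general} will translate every piece of the hypothesis and conclusion of Corollary~\ref{cor:2} into the language of $H$-colorings, so the conclusion will follow directly without any combinatorial work.

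More concretely, first I would let $C$ denote the set of colors appearing on $E(G)$, take $H$ to be the complete graph on $C$ (no loops), and view the coloring as a function $c:E(G)\longrightarrow V(H)$, which makes $G$ an $H$-colored graph. Then item~(1) of Observation~\ref{obs:general} gives that, for each $x\in V(G)$, the auxiliary graph $G_x$ is a complete $k_x$-partite graph with $k_x=\delta^c(x)$, so the hypothesis $\delta^c(x)\ge \frac{n+1}{2}$ becomes exactly the structural hypothesis $k_x\ge \frac{n+1}{2}$ required by Corollary~\ref{cor:4Hciclo}. Moreover, items~(3) and~(4) of Observation~\ref{obs:general} already guarantee, for free, that $G$ contains neither a cycle of length~3 with exactly~2 obstructions nor a cycle of length~4 with exactly~3 obstructions, which are precisely the remaining two hypotheses of Corollary~\ref{cor:4Hciclo}.

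Therefore Corollary~\ref{cor:4Hciclo} applies: for every $v\in V(G)$, there exists an $H$-cycle of length~4 through $v$. Finally, item~(2) of Observation~\ref{obs:general} says that in this setting an $H$-cycle is the same as a properly colored cycle, so this $H$-cycle of length~4 is the desired properly colored $4$-cycle containing $v$, completing the proof.

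There is essentially no obstacle: the proof is a bookkeeping exercise that amounts to reading off Observation~\ref{obs:general} term by term. The only small point to be careful about is that the edge-coloring of $G$ should be recast as a function into $V(H)$ where $H$ is chosen so that every pair of distinct colors appearing in $G$ is adjacent in $H$; using the complete graph on $C$ (without loops) ensures this, and simultaneously guarantees that "walks in $H$" avoid consecutive identical colors, which is exactly the properly colored condition.
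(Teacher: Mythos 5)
Your proposal is correct and is precisely the derivation the paper intends: set $H$ equal to the complete graph (without loops) on the colors, use Observation~\ref{obs:general} to convert the color-degree hypothesis into $k_x\ge\frac{n+1}{2}$ and to supply the two obstruction-free conditions, apply Corollary~\ref{cor:4Hciclo}, and translate the resulting $H$-cycle of length~4 back into a properly colored cycle. No issues.
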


\begin{remark}
	The edge-colored complete graph $G$ of the Figure~\ref{fig:contraejemplo} shows that, Corollary~\ref{cor:1} does not imply Theorem~\ref{teo:3Hciclo}, and Corollary~\ref{cor:2} does not imply Corollary~\ref{cor:4Hciclo}. 
\end{remark}

\begin{figure}[h!]
	\begin{center}
		\includegraphics[scale=0.4]{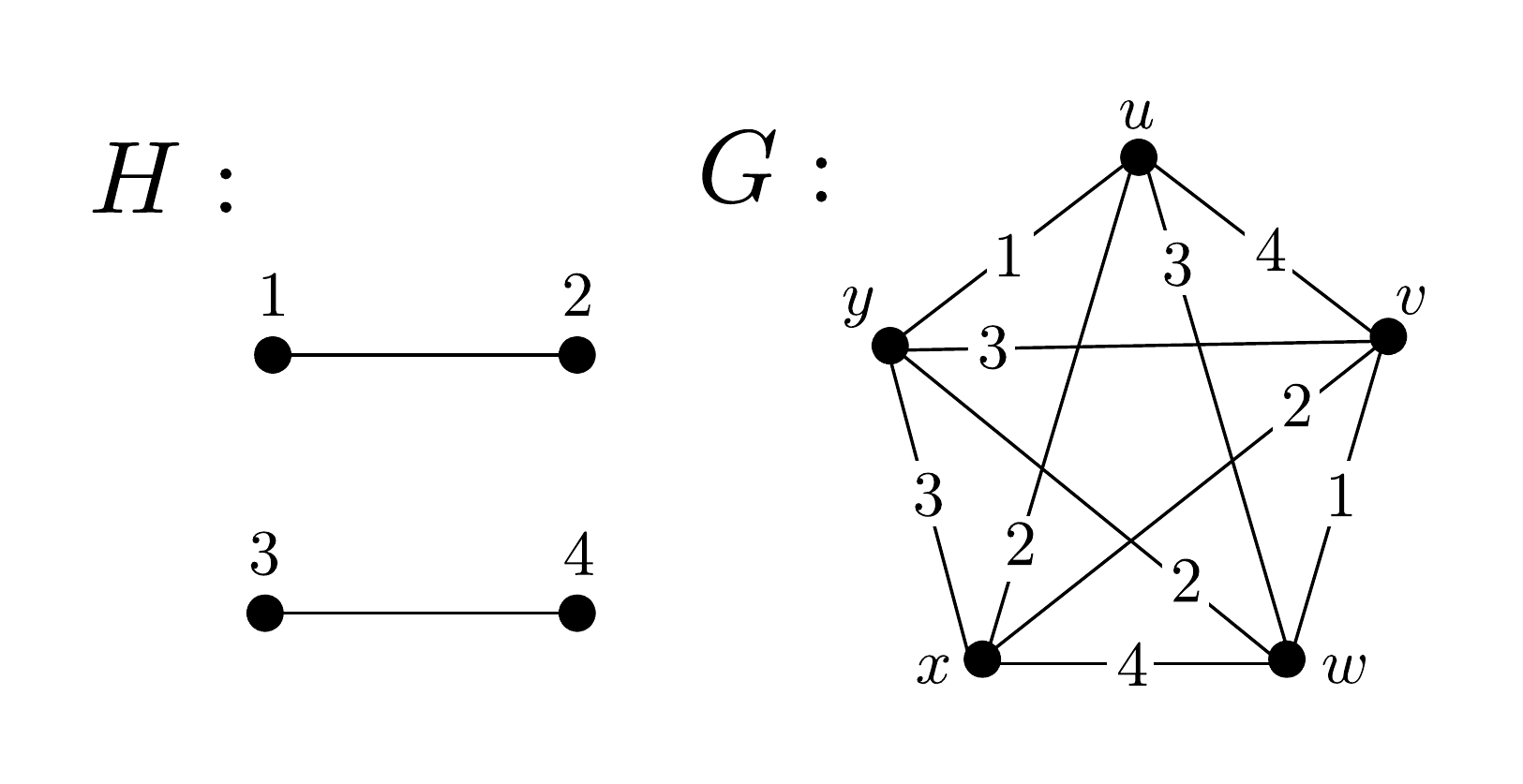}
		\caption{Corollary~\ref{cor:1} does not imply Theorem~\ref{teo:3Hciclo}, and Corollary~\ref{cor:2} does not imply Corollary~\ref{cor:4Hciclo}.}
		\label{fig:contraejemplo}
	\end{center}
\end{figure}


\end{document}